\newtheorem{theorem}{Theorem}
\newtheorem{algorithm}[theorem]{Algorithm}
\newtheorem{lemma}[theorem]{Lemma}
\newenvironment{problem}{\pb\rm}{\endpb}
\newenvironment{remark}{\rem\rm}{\endrem}
\newcounter{unnumber}
\newenvironment{proof}{\prf\rm}{\hfill{$\blacksquare$}\endprf}
\newcommand{\R}{\mathbb{R}}%
\newcommand{\N}{\mathbb{N}}%
\newcommand{\ol}{\overline}%
\DeclareMathOperator*\inte{int}%
\DeclareMathOperator*\sqri{sqri}%
\DeclareMathOperator*\ri{ri}%
\DeclareMathOperator*\dom{dom}%
\DeclareMathOperator*\B{\overline{\R}}%
\DeclareMathOperator*\gr{Gr}%
\DeclareMathOperator*\ran{ran}%
\DeclareMathOperator*\id{Id}%
\DeclareMathOperator*\prox{prox}%
\DeclareMathOperator*\argmin{argmin}
\title{ADMM for monotone operators: convergence analysis and rates}
\author{Radu Ioan Bo\c{t} \thanks{University of Vienna, Faculty of Mathematics, Oskar-Morgenstern-Platz 1, A-1090 Vienna, Austria,
email: radu.bot@univie.ac.at. Research partially supported by FWF (Austrian Science Fund), project I 2419-N32.} \and
Ern\"{o} Robert Csetnek \thanks {University of Vienna, Faculty of Mathematics, Oskar-Morgenstern-Platz 1, A-1090 Vienna, Austria,
email: ernoe.robert.csetnek@univie.ac.at. Research supported by FWF (Austrian Science Fund), project P 29809-N32.}}
\begin{document}
\maketitle

\noindent \textbf{Abstract.} We propose in this paper a unifying scheme for several algorithms from the literature dedicated to the 
solving of monotone inclusion problems involving compositions with linear continuous operators in infinite dimensional 
Hilbert spaces. We show that a number of primal-dual algorithms for monotone inclusions and also  
the classical ADMM numerical scheme for convex optimization problems, along with some of its variants, 
can be embedded in this unifying scheme. While in the first part of the paper convergence results for the iterates are reported, 
the second part is devoted to the derivation of convergence rates obtained by combining variable metric techniques with strategies 
based on suitable choice of dynamical step sizes. \vspace{1ex}

\noindent \textbf{Key Words.} monotone operators, primal-dual algorithm, ADMM algorithm,  
subdifferential, convex optimization, Fenchel duality \vspace{1ex}

\noindent \textbf{AMS subject classification.} 47H05, 65K05, 90C25

\section{Introduction and preliminaries}\label{sec1}

Consider the convex optimization problem 
\begin{equation}\label{prim-h-intr} \inf_{x\in{\cal H}}\{f(x)+g(Lx) + h(x)\},
\end{equation}
where ${\cal H}$ and ${\cal G}$ are real Hilbert spaces, $f:{\cal H}\to \overline \R :=\R\cup\{\pm\infty\}$ and  $g:{\cal G}\to \overline \R$ are proper, convex and lower semicontinuous functions, 
$h:{\cal H}\rightarrow\R$ is a convex and Fr\'{e}chet differentiable function with Lipschitz continuous gradient and 
$L:{\cal H}\rightarrow{\cal G}$ is a linear continuous operator.  

Due to  numerous applications in fields like signal and image processing, portfolio optimization, cluster analysis, location theory, network communication, machine learning, the design and investigation of 
numerical algorithms for solving convex optimization problems of type \eqref{prim-h-intr} 
attracted in the last couple of years huge interest from the applied mathematics community.  The most prominent methods one can find in the literature for solving \eqref{prim-h-intr}
are the {\it primal-dual proximal splitting algorithms} and the {\it ADMM algorithms}. 
We briefly describe the two classes of algorithms.

First proximal splitting algorithms for solving convex optimization problems involving compositions 
with linear continuous operators  have been reported by Combettes and Ways \cite{comb-w}, Esser, Zhang and Chan \cite{esser} and Chambolle and Pock \cite{ch-pck}. Further investigations have been made in 
the more general framework of finding zeros of sums of linearly composed maximally monotone operators, and monotone and Lipschitz, respectively, cocoercive operators. The resulting numerical schemes 
have been employed in the solving of the inclusion problem
\begin{equation}\label{inclusion}
\mbox{find} \ x \in {\cal H} \ \mbox{such that} \ 0 \in \partial f(x) + (L^* \circ \partial g \circ L)(x) + \nabla h(x),
\end{equation}
which represents the system of optimality conditions of problem \eqref{prim-h-intr}.

Brice\~{n}o-Arias and Combettes pioneered this approach in \cite{br-combettes}, by reformulating the general monotone inclusion in an appropriate product space  as the sum 
of a maximally monotone operator and a linear and skew one, and by solving the resulting inclusion problem via a forward-backward-forward type algorithm (see also \cite{combettes-pesquet}). 
Afterwards, by using the same product space approach, this time 
in a suitable renormed space, V\~{u} succeeded in \cite{vu} in formulating a primal-dual splitting algorithm of forward-backward type, in other words, by saving a forward step. 
Condat has presented in \cite{condat2013} in the variational case algorithms of the same nature with the one in \cite{vu}. Under strong monotonicity/convexity assumptions and the use of 
dynamic step size strategies convergence rates have been provided in \cite{b-c-h2} for the primal-dual algorithm in \cite{vu} (see also \cite{ch-pck}), and in \cite{b-h} for the primal-dual 
algorithm in \cite{combettes-pesquet}. 

We describe the ADMM algorithm for solving \eqref{prim-h-intr} in the case $h=0$, which corresponds to the standard setting in the literature. By introducing an auxiliary variable one can rewrite \eqref{prim-h-intr} as  
\begin{equation}\label{prim-x-z}  \inf_{\substack{(x,z)\in\cal{H}\times\cal{G}\\Lx-z=0}}\{f(x)+g(z)\}.
\end{equation} 
For a fixed real number $c >0$ we consider the augmented Lagrangian associated with problem \eqref{prim-x-z}, 
which is defined as
$$L_c:{\cal{H}} \times {\cal{G}} \times {\cal{G}} \rightarrow \B, \ L_c(x,z,y)=f(x)+g(z)+\langle y,Ax-z\rangle + \frac{c}{2} \|Ax-z \|^2.$$ 
The ADMM algorithm relies on the alternating minimization of the augmented Lagrangian with respect to the variables $x$  and $z$ (see \cite{bpcpe, gabaymercier, fortinglowinski, gabay, ecb} and 
Remark \ref{bbc-sh-teb} for the exact formulation of the iterative scheme).  Generally, the minimization with respect to the variable $x$ does not lead to a proximal step. This drawback has been overcome 
by Shefi and Teboulle in \cite{shefi-teboulle2014} by introducing additional suitably chosen metrics, and also in
\cite{bbc-admm} for an extension of the ADMM algorithm designed for problems which involve also smooth parts in the objective.

The aim of this paper is to provide a unifying algorithmic scheme for solving monotone inclusion problems which encompasses several primal-dual iterative methods \cite{vu, condat2013, b-c-h, ch-pck}, 
and the ADMM algorithm (and its variants from \cite{shefi-teboulle2014}) in the particular case of convex optimization problems. A closer look at the structure of the new algorithmic scheme shows that it 
translates the paradigm behind ADMM methods for optimization problems to the solving of monotone inclusions.  We carry out a convergence analysis for the proposed iterative scheme by making use of techniques 
relying on the Opial Lemma applied in a variable metric setting. Furthermore, we derive convergence rates for the iterates under supplementary strong monotonicity assumptions. To this aim we use a dynamic 
step strategy, based on which we can provide a unifying scheme for the algorithms in \cite{b-c-h2, ch-pck}. Not least we also provide accelerated versions for the classical ADMM algorithm (and its variable metric variants). 

In what follows we recall some elements of the theory of monotone operators in Hilbert spaces and refer  for more details to \cite{bauschke-book, bo-van, simons}.

Let ${\cal H}$ be a real Hilbert space with inner product $\langle\cdot,\cdot\rangle$ and associated norm 
$\|\cdot\|=\sqrt{\langle \cdot,\cdot\rangle}$.
For an arbitrary set-valued operator $A:{\cal H}\rightrightarrows {\cal H}$ we denote by 
$\gr A=\{(x,u)\in {\cal H}\times {\cal H}:u\in Ax\}$ its graph, 
by $\dom A=\{x \in {\cal H} : Ax \neq \emptyset\}$ its domain and by $A^{-1}:{\cal H}\rightrightarrows {\cal H}$
its inverse operator, defined by $(u,x)\in\gr A^{-1}$ if and only if $(x,u)\in\gr A$. We say that $A$ is monotone if 
$\langle x-y,u-v\rangle\geq 0$ for all $(x,u),(y,v)\in\gr A$.
A monotone operator $A$ is said to be maximal monotone, if there exists no proper monotone extension of the graph of $A$ on 
${\cal H}\times {\cal H}$. 

The resolvent of $A$, $J_A:{\cal H} \rightrightarrows {\cal H}$, is defined by $J_A=(\id+A)^{-1}$, where 
$\id :{\cal H} \rightarrow {\cal H}, \id(x) = x$ for all $x \in {\cal H}$, is the identity operator on ${\cal H}$. 
If $A$ is maximal monotone, then $J_A:{\cal H} \rightarrow {\cal H}$ is single-valued and maximal monotone (see \cite[Proposition 23.7 and Corollary 23.10]{bauschke-book}). 
For an arbitrary $\gamma>0$ we have (see \cite[Proposition 23.2]{bauschke-book})
$$p\in J_{\gamma A}x \ \mbox{if and only if} \ (p,\gamma^{-1}(x-p))\in\gr A$$
and (see \cite[Proposition 23.18]{bauschke-book})
\begin{equation}\label{j-inv-op}
J_{\gamma A}+\gamma J_{\gamma^{-1}A^{-1}}\circ \gamma^{-1}\id=\id.
\end{equation}

When ${\cal G}$ is another Hilbert space and $L:{\cal H} \rightarrow {\cal G}$ is a linear continuous operator, then 
$L^* : {\cal G} \rightarrow {\cal H}$, defined by $\langle L^*y,x\rangle = \langle y,Lx \rangle$ for all 
$(x,y) \in {\cal H} \times {\cal G}$, denotes the adjoint operator of $L$, 
while the norm of $L$ is defined as $\|L\| = \sup\{\|Lx\|: x \in {\cal H}, \|x\| \leq 1\}$.

Let $\gamma>0$ be arbitrary. We say that $A$ is $\gamma$-strongly monotone, if $\langle x-y,u-v\rangle\geq \gamma\|x-y\|^2$ for 
all $(x,u),(y,v)\in\gr A$. A single-valued operator $A:{\cal H}\rightarrow {\cal H}$ is said to be $\gamma$-cocoercive, if 
$\langle x-y,Ax-Ay\rangle\geq \gamma\|Ax-Ay\|^2$ for all $(x,y)\in {\cal H}\times {\cal H}$. Moreover, $A$ is $\gamma$-Lipschitz continuous,
if $\|Ax-Ay\|\leq \gamma\|x-y\|$ for all $(x,y)\in {\cal H}\times {\cal H}$. A single-valued linear operator 
$A:{\cal H} \rightarrow {\cal H}$ is said to be skew, if $\langle x,Ax\rangle =0$ for all $x \in {\cal H}$. 
The parallel sum of two operators $A,B:{\cal H}\rightrightarrows {\cal H}$ is defined by 
$A\Box B: {\cal H}\rightrightarrows {\cal H}, A\Box B=(A^{-1}+B^{-1})^{-1}$.

Since the variational case will be also in the focus of our investigations, we recall next some elements of convex analysis. 

For a function $f:{\cal H}\rightarrow\overline{\R}$ we denote 
by $\dom f=\{x\in {\cal H}:f(x)<+\infty\}$ its effective domain and say that $f$ is proper, if $\dom f\neq\emptyset$ 
and $f(x)\neq-\infty$ for all $x\in {\cal H}$. We denote by $\Gamma({\cal H})$ the family of proper convex and lower semi-continuous 
extended real-valued functions defined on ${\cal H}$. Let $f^*:{\cal H} \rightarrow \overline \R$, 
$f^*(u)=\sup_{x\in {\cal H}}\{\langle u,x\rangle-f(x)\}$ for all $u\in {\cal H}$, be the conjugate function of $f$. 
The subdifferential of $f$ at $x\in {\cal H}$, with $f(x)\in\R$, is the set 
$\partial f(x):=\{v\in {\cal H}:f(y)\geq f(x)+\langle v,y-x\rangle \ \forall y\in {\cal H}\}$. We take by convention 
$\partial f(x):=\emptyset$, if $f(x)\in\{\pm\infty\}$.   If $f\in\Gamma({\cal H})$, then $\partial f$ is a 
maximally monotone operator (cf. \cite{rock}) and it holds $(\partial f)^{-1} = 
\partial f^*$. For $f,g:{\cal H}\rightarrow \overline{\R}$ two proper functions, we consider also their infimal convolution, 
which is the function $f\Box g:{\cal H}\rightarrow\B$, defined by $(f\Box g)(x)=\inf_{y\in {\cal H}}\{f(y)+g(x-y)\}$, 
for all $x\in {\cal H}$.

When $f\in\Gamma({\cal H})$ and $\gamma > 0$, for every $x \in {\cal H}$ we denote by $\prox_{\gamma f}(x)$ the proximal point
of parameter $\gamma$ of $f$ at $x$, which is the unique optimal solution of the optimization problem
\begin{equation}\label{prox-def}\inf_{y\in {\cal H}}\left \{f(y)+\frac{1}{2\gamma}\|y-x\|^2\right\}.
\end{equation}
Notice that $J_{\gamma\partial f}=(\id +\gamma\partial f)^{-1}=\prox_{\gamma f}$, thus 
$\prox_{\gamma f} :{\cal H} \rightarrow {\cal H}$ is a single-valued operator fulfilling the extended 
Moreau's decomposition formula
\begin{equation}\label{prox-f-star}
\prox\nolimits_{\gamma f}+\gamma\prox\nolimits_{(1/\gamma)f^*}\circ\gamma^{-1}\id =\id.
\end{equation}
Finally, we say that the function $f:{\cal H} \rightarrow \overline \R$ is $\gamma$-strongly convex for $\gamma >0$, 
if $f-\frac{\gamma}{2}\|\cdot\|^2$ is a convex function. This property implies that $\partial f$ is $\gamma$-strongly monotone (see \cite[Example 22.3]{bauschke-book}).

\section{The ADMM paradigm employed to monotone inclusions}\label{sec2}

In this section we propose an algorithm for solving monotone inclusion problems involving compositions with linear continuous operators in infinite dimensional Hilbert spaces which is designed in the 
spirit of the ADMM paradigm.

\subsection{Problem formulation, algorithm and particular cases}\label{sub21}

The following problem represents the central point of our investigations.

\begin{problem}\label{pr-mon}
Let ${\cal H}$ and ${\cal G}$ be real Hilbert spaces, $A:{\cal H}\rightrightarrows {\cal H}$ and 
$B:{\cal G } \rightrightarrows {\cal G}$ be maximally monotone operators and 
$C:{\cal H}\rightarrow {\cal H}$ an $\eta$-cocoercive operator for $\eta>0$. 
Let $L:{\cal H}\rightarrow$ ${\cal G}$ be a linear continuous operator. 
To solve is the primal monotone inclusion
\begin{equation}\label{p-mon}
\mbox{find }  x \in {\cal H} \ \mbox{such that} \ 0\in A x+ (L^*\circ B\circ L) x+Cx,
\end{equation}
together with its dual monotone inclusion
\begin{equation}\label{d-mon}
\mbox{ find } v\in{\cal G} \ \mbox{such that } \exists x\in {\cal H}: -L^*v\in Ax+Cx\mbox{ and } v\in B(Lx).
\end{equation}
\end{problem}

Simple algebraic manipulations yield that \eqref{d-mon} is equivalent to the problem 
\begin{equation*}
\mbox{ find } v\in{\cal G} \ \mbox{such that }  \ 0\in B^{-1}v+ \Big((-L)\circ (A+C)^{-1}\circ (-L^*)\Big)v ,
\end{equation*}
which can be equivalently written as
\begin{equation}\label{d-mon-equiv}
\mbox{ find } v\in{\cal G} \ \mbox{such that }   \ 0\in B^{-1}v+ \Big((-L)\circ (A^{-1}\Box C^{-1})\circ (-L^*)\Big)v. 
\end{equation}

We say that $(x, v)\in{\cal H} \times$ ${\cal G}$ is a primal-dual solution to the primal-dual pair of monotone inclusions \eqref{p-mon}-\eqref{d-mon}, if 
\begin{equation}\label{prim-dual}-L^*v\in Ax+Cx \mbox{ and }v\in B(Lx).\end{equation}

If $x\in{\cal H}$ is a solution to \eqref{p-mon}, then there exists 
$v\in$ ${\cal G}$ such that $(x, v)$ is a primal-dual solution to \eqref{p-mon}-\eqref{d-mon}. On the other hand, if $v\in$ ${\cal G}$ is a solution to \eqref{d-mon}, then there 
exists $x\in{\cal H}$ such that $(x, v)$ is a primal-dual solution to \eqref{p-mon}-\eqref{d-mon}. Furthermore, if 
$(x, v)\in{\cal H} \times$ ${\cal G}$ is a primal-dual solution to \eqref{p-mon}-\eqref{d-mon}, 
then $x$ is a solution to \eqref{p-mon} and $v$ is a solution to \eqref{d-mon}.

Next we relate this general setting to the solving of a primal-dual pair of convex optimization problems.

\begin{problem}\label{pr-var} Let ${\cal H}$ and ${\cal G}$ be real Hilbert spaces, $f\in\Gamma({\cal H})$, 
$g\in\Gamma({\cal G})$, $h:{\cal H}\rightarrow\R$ a convex and 
Fr\'{e}chet differentiable function with $\eta^{-1}$-Lipschitz continuous gradient, for $\eta>0$, and 
$L:{\cal H}\rightarrow{\cal G}$ a linear continuous operator. Consider the primal convex optimization problem 
\begin{equation}\label{prim-h} \inf_{x\in{\cal H}}\{f(x)+h(x)+g(Lx)\}
\end{equation}
and its Fenchel dual problem 
\begin{equation}\label{dual-h}  \sup_{v\in{\cal G}}\{-(f^*\mathop{\Box} h^*)(-L^*v)-g^*(v)\}.\end{equation}
\end{problem}

The system of optimality conditions for the primal-dual pair of optimization problems \eqref{prim-h}-\eqref{dual-h} reads:
\begin{equation}\label{opt-cond} -L^*\ol v-\nabla h(\ol x)\in\partial f(\ol x) \mbox{ and } \ol v\in\partial g(L\ol x), 
\end{equation}
which is actually a particular formulation of \eqref{prim-dual} when
\begin{equation}\label{choice}
A:=\partial f, \ C:=\nabla h, \ B:=\partial g.
\end{equation}
Notice that, due to the Baillon-Haddad Theorem (see 
\cite[Corollary 18.16]{bauschke-book}), $\nabla h$ is $\eta$-cocoercive.

If \eqref{prim-h} has an optimal solution $x\in{\cal H}$ and a suitable qualification condition is fulfilled, 
then there exists $v\in{\cal G}$, an optimal solution to  \eqref{dual-h}, such that  \eqref{opt-cond} holds. If \eqref{dual-h} has an optimal solution $v\in{\cal G}$ and a suitable qualification condition is fulfilled, 
then there exists $x\in{\cal H}$, an optimal solution to  \eqref{prim-h}, such that  \eqref{opt-cond} holds. Furthermore, if the pair 
$(x, v)\in{\cal H}\times{\cal G}$ satisfies relation \eqref{opt-cond}, then $x$ is an optimal solution to \eqref{prim-h}, 
$v$ is an optimal solution to \eqref{dual-h} and the optimal objective values of \eqref{prim-h} and \eqref{dual-h} coincide. 

One of the most popular and useful qualification conditions guaranteeing the existence of a dual optimal solution is the one known under the name Attouch-Br\'{e}zis and which requires that:
\begin{equation}\label{reg-cond} 0\in\sqri(\dom g-L(\dom f))
\end{equation}
holds.
Here, for $S\subseteq {\cal G}$ a convex set,  we denote by
$$\sqri S:=\{x\in S:\cup_{\lambda>0}\lambda(S-x) \ \mbox{is a closed linear subspace of} \ {\cal G}\}$$
its strong quasi-relative interior. The topological interior is contained in the 
strong quasi-relative interior: $\inte S\subseteq\sqri S$, however, in general this inclusion may be strict. 
If ${\cal G}$ is finite-dimensional, then for a nonempty and convex set $S \subseteq   {\cal G}$, one has $\sqri S =\ri S$, which denotes the topological interior of $S$ relative to its affine hull. 
Considering again the infinite dimensional setting, we remark that condition 
\eqref{reg-cond} is fulfilled, if there exists $x'\in\dom f$ such that $Lx'\in \dom g$ and $g$ is continuous at $Lx'$. For further considerations on convex duality  we refer to
\cite{bo-van, b-hab, bauschke-book, EkTem, Zal-carte}.

Throughout the paper the following additional notations and facts will be used. We denote by ${\cal S_+}({\cal H})$ the family of operators $U:{\cal H}\rightarrow {\cal H}$ which are 
linear, continuous, self-adjoint and positive semidefinite. For $U\in {\cal S_+}({\cal H})$ we consider the semi-norm defined by 
$$\|x\|_U^2=\langle x,Ux\rangle \ \forall x\in {\cal H}.$$
The Loewner partial ordering is defined for $U_1,U_2\in {\cal S_+}({\cal H})$ by
$$U_1\succcurlyeq U_2\Leftrightarrow \|x\|_{U_1}^2\geq \|x\|_{U_2}^2  \ \forall x\in {\cal H}.$$
Finally, for $\alpha> 0$, we set  
$${\cal P}_{\alpha}({\cal H}):=\{U\in {\cal S_+}({\cal H}): U\succcurlyeq \alpha\id \}.$$
Let $\alpha>0$, $U\in {\cal P}_{\alpha}({\cal H})$ and $A:{\cal H}\rightrightarrows{\cal H}$ a maximally monotone 
operator. Then the operator $(U+A)^{-1}:{\cal H}\to {\cal H}$ is single-valued with full domain; in other words
$$\mbox{for every} \ x\in {\cal H} \mbox{ there exists a unique }p\in{\cal H}\mbox{ such that }p=(U+A)^{-1}x.$$
Indeed, this is a consequence of the relation $$(U+A)^{-1}=(\id +U^{-1}A)^{-1}\circ U^{-1}$$
and of the maximal monotonicity of the operator $U^{-1}A$ in the renormed Hilbert space $({\cal H},\langle \cdot,\cdot\rangle_U)$ (see for example 
\cite[Lemma 3.7]{combettes-vu2013}), where $$\langle x,y\rangle_U:=\langle x,Uy\rangle \ \forall x,y\in {\cal H}.$$  

We are now in the position to formulate the algorithm relying on the ADMM paradigm for solving the primal-dual pair of monotone inclusions \eqref{p-mon}-\eqref{d-mon}.

\begin{algorithm}\label{alg-mon} For all $k \geq 0$, let $M_1^k\in {\cal S_+}({\cal H})$, $M_2^k\in {\cal S_+}({\cal G})$  and $c>0$ be such 
that $cL^*L+M_1^k\in{\cal P}_{\alpha_k}({\cal H})$ for $\alpha_k>0$.
Choose $(x^0,z^0,y^0)\in{\cal H}\times{\cal G}\times{\cal G}$. For all $k\geq 0$ generate the sequence $(x^k,z^k,y^k)_{k \geq 0}$ as follows:
\begin{eqnarray} x^{k+1} & = &
\left(cL^*L+M_1^k+A\right)^{-1}\left[cL^*(z^k-c^{-1}y^k)+M_1^kx^k-Cx^k\right] \label{C-var-x}\\
z^{k+1} & = &\left(\id+c^{-1}M_2^k+c^{-1}B\right)^{-1}\left[Lx^{k+1}+c^{-1}y^k+c^{-1}M_2^k z^k\right] \label{C-var-z}\\
y^{k+1} & = & y^k+c(Lx^{k+1}-z^{k+1}).\label{C-var-y}
\end{eqnarray}
\end{algorithm}

As shown below, several algorithms from the literature can be embedded in this numerical scheme.  

\begin{remark}\label{bbc-sh-teb} For all $k\geq 0$, the equations \eqref{C-var-x} and \eqref{C-var-z} are equivalent to 
\begin{equation}\label{C-var-x-equiv}cL^*(z^k-Lx^{k+1}-c^{-1}y^k)+M_1^k(x^k-x^{k+1})-Cx^k\in Ax^{k+1},\end{equation}
and, respectively, 
\begin{equation}\label{C-var-z-equiv}c(Lx^{k+1}-z^{k+1}+c^{-1}y^k)+M_2^k(z^k-z^{k+1})\in Bz^{k+1}.\end{equation}
Notice that the latter is equivalent to  
\begin{equation}\label{C-var-z-equiv-equiv-equiv}y^{k+1}+M_2^k(z^k-z^{k+1})\in Bz^{k+1}.\end{equation}

In the variational setting as described in Problem \ref{pr-var}, namely, by choosing the operators as in \eqref{choice}, the inclusion \eqref{C-var-x-equiv} becomes 
$$0\in\partial f(x^{k+1})+cL^*(Lx^{k+1}-z^{k}+c^{-1}y^{k})+M_1^{k}(x^{k+1}-x^{k})+\nabla h(x^{k}),$$
which is equivalent to 
\begin{equation}\label{bbc1}x^{k+1} = 
\argmin_{x\in{\cal H}} \left\{f(x)+\langle x-x^k,\nabla h(x^k)\rangle +\frac{c}{2}\|Lx-z^k+c^{-1}y^k\|^2 + \frac{1}{2}\|x-x^k\|_{M_1^k}^2\right\}.
\end{equation}
On the other hand, \eqref{C-var-z-equiv} becomes 
$$c(Lx^{k+1}-z^{k+1}+c^{-1}y^k)+M_2^k(z^k-z^{k+1})\in\partial g(z^{k+1}),$$
which is equivalent to 
\begin{equation}\label{bbc2}z^{k+1}  = \argmin_{z\in{\cal G}} \left\{g(z)+\frac{c}{2}\|Lx^{k+1}-z+c^{-1}y^k\|^2 + \frac{1}{2}\|z-z^k\|_{M_2^k}^2\right\}. 
\end{equation}
Consequently, the iterative scheme \eqref{C-var-x}-\eqref{C-var-y} reads
\begin{eqnarray} x^{k+1} & = &
\argmin_{x\in{\cal H}} \left\{f(x)+\langle x-x^k,\nabla h(x^k)\rangle +\frac{c}{2}\|Ax-z^k+c^{-1}y^k\|^2 + \frac{1}{2}\|x-x^k\|_{M_1^k}^2\right\} \label{h-var-x}\\
z^{k+1} & = &\argmin_{z\in{\cal G}} \left\{g(z)+\frac{c}{2}\|Ax^{k+1}-z+c^{-1}y^k\|^2 + \frac{1}{2}\|z-z^k\|_{M_2^k}^2\right\} \label{h-var-z}\\
y^{k+1} & = & y^k+c(Ax^{k+1}-z^{k+1}),\label{h-var-y}
\end{eqnarray}
which is the algorithm formulated and investigated by Banert, Bo\c t and Csetnek in \cite{bbc-admm}. 
The case when $h=0$ and $M_1^k, M_2^k$ are constant for every $k\geq 0$ has been considered in the setting of finite dimensional 
Hilbert spaces by Shefi and Teboulle \cite{shefi-teboulle2014}. We want to emphasize that when $h=0$ and $M_1^k=M_2^k=0$ for all 
$k\geq 0$ the iterative scheme \eqref{h-var-x}-\eqref{h-var-y} collapses into the classical version of the ADMM algorithm. 
\end{remark}

\begin{remark}\label{part-cases-alg-mon} For all $k \geq 0$, consider the particular choices $M_1^k:=\frac{1}{\tau_k}\id-cL^*L$ for $\tau_k > 0$, and $M_2^k:=0$.

(i) Let $k \geq 0$ be fixed. Relation \eqref{C-var-x} (written for $x^{k+2}$) reads
\begin{equation*}x^{k+2}=\left(\tau_{k+1}^{-1}\id+A\right)^{-1}\left[cL^*(z^{k+1}-c^{-1}y^{k+1})+\tau_{k+1}^{-1}x^{k+1}-cL^*Lx^{k+1}-Cx^{k+1}\right].\end{equation*}
From \eqref{C-var-y} we have $$cL^*(z^{k+1}-c^{-1}y^{k+1})=-L^*(2y^{k+1}-y^k)+cL^*L x^{k+1},$$
hence 
\begin{eqnarray}\label{x}x^{k+2}& = & \left(\tau_{k+1}^{-1}\id+A\right)^{-1}\left[\tau_{k+1}^{-1}x^{k+1}-L^*(2y^{k+1}-y^k)-Cx^{k+1}\right]\nonumber\\
& = & J_{\tau_{k+1}A}\left(x^{k+1}-\tau_{k+1}Cx^{k+1}-\tau_{k+1}L^*(2y^{k+1}-y^k)\right).
\end{eqnarray}

On the other hand, by using \eqref{j-inv-op}, relation \eqref{C-var-z} reads
\begin{eqnarray*}z^{k+1} = J_{c^{-1}B}\left(Lx^{k+1}+c^{-1}y^k\right) = Lx^{k+1}+c^{-1}y^k-c^{-1}J_{cB^{-1}}\left(cLx^{k+1}+y^k\right)
\end{eqnarray*}
which is equivalent to 
$$cLx^{k+1}+y^k-cz^{k+1}=J_{cB^{-1}}\left(cLx^{k+1}+y^k\right).$$
By using again \eqref{C-var-y}, this can be reformulated as
\begin{equation}\label{y} y^{k+1}=J_{cB^{-1}}\left(y^k+cLx^{k+1}\right).
\end{equation}

The iterative scheme in \eqref{x}- \eqref{y} generates for a given starting point $(x^1,y^0)\in{\cal H} \times{\cal G}$ and $c>0$ 
a sequence $(x^k,y^k)_{k \geq 1}$ which is generated for all $k \geq 0$ as follows:
\begin{eqnarray}
y^{k+1} & = & J_{cB^{-1}}\left(y^k+cLx^{k+1}\right)\label{vu1}\\
x^{k+2} & = & J_{\tau_{k+1}A}\left(x^{k+1}-\tau_{k+1}Cx^{k+1}-\tau_{k+1}L^*(2y^{k+1}-y^k)\right)\label{vu2}.
\end{eqnarray}

When $\tau_k=\tau$ for all $k\geq 1$, the algorithm \eqref{vu1}-\eqref{vu2} recovers a numerical scheme for solving monotone inclusion problems proposed by 
V\~{u} in \cite[Theorem 3.1]{vu}. More precisely, the error-free variant of the algorithm in \cite[Theorem 3.1]{vu} formulated for a constant sequence 
$(\lambda_n)_{n \in \N}$ equal to $1$ and employed to the solving of the primal-dual pair \eqref{d-mon-equiv}-\eqref{p-mon} (by reversing the order in 
Problem \ref{pr-mon}, that is, by treating \eqref{d-mon-equiv} as the primal monotone inclusion  and \eqref{p-mon} as its dual monotone inclusion) 
is nothing else than the iterative scheme \eqref{vu1}-\eqref{vu2}. 

In case $C=0$, \eqref{vu1}-\eqref{vu2} becomes for all $k \geq 0$
\begin{eqnarray}
x^{k+1} & = & J_{\tau_{k}A}\left(x^{k}-\tau_{k}L^*(2y^{k}-y^{k-1})\right)\label{bch1}\\
y^{k+1} & = & J_{cB^{-1}}\left(y^k+cLx^{k+1}\right)\label{bch2},
\end{eqnarray}
which, in case $\tau_k=\tau$ for all $k\geq 1$ and $c\tau\|L\|^2<1$, is nothing else than the algorithm introduced by Bo\c t, Csetnek and Heinrich  in \cite[Algorithm 1, Theorem 2]{b-c-h} 
applied to the solving of the primal-dual pair \eqref{d-mon-equiv}-\eqref{p-mon} (by reversing the order in Problem \ref{pr-mon}).

(ii) Considering again the variational setting as described in Problem \ref{pr-var}, the algorithm \eqref{vu1}-\eqref{vu2} reads for all $k \geq 0$
\begin{eqnarray}
y^{k+1} & = & \prox\nolimits_{cg^*}\left(y^k+cLx^{k+1}\right)\label{condat1}\\
x^{k+2} & = & \prox\nolimits_{\tau_{k+1}f}\left(x^{k+1}-\tau_{k+1}\nabla h(x^{k+1})-\tau_{k+1}L^*(2y^{k+1}-y^k)\right)\label{condat2}.
\end{eqnarray}
When $\tau_k=\tau >0$ for all $k\geq 1$, one recovers a primal-dual algorithm investigated 
under the assumption $\frac{1}{\tau}-c\|L\|^2>\frac{1}{2\eta}$ by Condat in \cite[Algorithm 3.2, Theorem 3.1]{condat2013}. 

Not least, \eqref{bch1}-\eqref{bch2} reads in the variational setting (which corresponds to the case $h=0$) for all $k \geq 0$
\begin{eqnarray}
x^{k+1} & = & \prox\nolimits_{\tau_k f}\left(x^{k}-\tau_{k}L^*(2y^{k}-y^{k-1})\right)\label{ch-pock1}\\
y^{k+1} & = & \prox\nolimits_{c g^*}\left(y^k+cLx^{k+1}\right)\label{ch-pock2}.
\end{eqnarray}
When $\tau_k=\tau >0$ for all $k\geq 1$, this iterative schemes becomes the algorithm proposed by Chambolle and Pock in \cite[Algorithm 1, Theorem 1]{ch-pck} for solving the primal-dual pair of 
optimization problems \eqref{dual-h}-\eqref{prim-h}  (in this order).
\end{remark}

\subsection{Convergence analysis}\label{sub22}

In this subsection we will address the convergence of the sequence of iterates generated by Algorithm \ref{alg-mon}. One of the  tools 
we will use in the proof of the convergence statement is  
the following version of the Opial Lemma formulated in the setting of variable metrics (see \cite[Theorem 3.3]{combettes-vu2013}). 

\begin{lemma}\label{opial-var} Let $S$ be a nonempty subset of ${\cal H}$ and $(x^k)_{k \geq 0}$ be a sequence in ${\cal H}$.  Let $\alpha>0$ and $W^k\in{\cal P}_{\alpha}({\cal H})$ be such that 
$W^k\succcurlyeq W^{k+1}$ for all $k\geq 0$. Assume that:  

(i) for all $z\in S$ and for all $k\geq 0$: $\|x^{k+1}-z\|_{W^{k+1}}\leq \|x^k-z\|_{W^k}$; 

(ii) every weak sequential cluster point of $(x^k)_{k\geq 0}$ belongs to $S$. 

\noindent Then $(x^k)_{k\geq 0}$ converges weakly to an element in $S$. 
\end{lemma}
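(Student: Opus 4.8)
The plan is to deduce from hypothesis (i) that $(x^k)_{k\geq 0}$ is bounded with a nonempty set of weak sequential cluster points, all lying in $S$ by (ii), and then to show this set is a singleton; weak convergence then follows from the standard fact that a bounded sequence in a Hilbert space possessing a unique weak sequential cluster point converges weakly to it. A preliminary observation I would record first concerns the metrics. Since the assumption $W^k\succcurlyeq W^{k+1}$ makes $(W^k)_{k\geq 0}$ a non-increasing sequence of self-adjoint operators bounded below by $\alpha\id$, the classical monotone convergence theorem for self-adjoint operators produces an operator $W\in\mathcal{P}_\alpha(\mathcal{H})$ with $W^kx\to Wx$ strongly for every $x\in\mathcal{H}$. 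I would prove this strong convergence from the elementary inequality $\|Sx\|^2\leq\|S\|\,\langle x,Sx\rangle$, valid for positive semidefinite $S$, applied to $S:=W^k-W\succcurlyeq 0$ (note $W^k\succcurlyeq W$ by passing to the limit in $W^k\succcurlyeq W^m$ as $m\to\infty$). This strong operator convergence is exactly what makes the variable-metric argument succeed.

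First I would fix $z\in S$. By (i) the nonnegative sequence $(\|x^k-z\|_{W^k})_{k\geq 0}$ is non-increasing, hence convergent; denote its limit by $\ell(z)$. Because $W^k\succcurlyeq\alpha\id$, one has $\alpha\|x^k-z\|^2\leq\|x^k-z\|_{W^k}^2\leq\|x^0-z\|_{W^0}^2$, so $(x^k)_{k\geq 0}$ is bounded, say $\|x^k\|\leq R$ for all $k$. By reflexivity of $\mathcal{H}$ the sequence has at least one weak sequential cluster point, and by (ii) every such point belongs to $S$.

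The heart of the matter is uniqueness of the weak cluster point. Let $x_1,x_2\in S$ be two of them. Using self-adjointness of $W^k$, I would expand
\[
\|x^k-x_1\|_{W^k}^2-\|x^k-x_2\|_{W^k}^2=-2\langle x^k,W^k(x_1-x_2)\rangle+\|x_1\|_{W^k}^2-\|x_2\|_{W^k}^2 .
\]
Since $x_1,x_2\in S$, the two terms forming the left-hand side converge (to $\ell(x_1)^2$ and $\ell(x_2)^2$), while $\|x_i\|_{W^k}^2=\langle x_i,W^kx_i\rangle\to\langle x_i,Wx_i\rangle$ by the strong convergence of $W^k$. Hence $\langle x^k,W^k(x_1-x_2)\rangle$ converges. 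Writing $w:=x_1-x_2$ and splitting $\langle x^k,W^kw\rangle=\langle x^k,Ww\rangle+\langle x^k,(W^k-W)w\rangle$, the last term is bounded by $R\,\|(W^k-W)w\|\to 0$, so $\langle x^k,Ww\rangle$ converges as well. Evaluating this limit along a subsequence with $x^{k_j}\rightharpoonup x_1$ gives the value $\langle x_1,Ww\rangle$, and along one with $x^{m_j}\rightharpoonup x_2$ the value $\langle x_2,Ww\rangle$; equating the two yields $\langle w,Ww\rangle=0$, whence $\alpha\|w\|^2\leq\langle w,Ww\rangle=0$ and therefore $x_1=x_2$.

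With uniqueness in hand, the bounded sequence $(x^k)_{k\geq 0}$ has a single weak sequential cluster point $x^\ast\in S$, and a routine subsequence argument (any subsequence has a weakly convergent sub-subsequence, which must converge to $x^\ast$) shows $x^k\rightharpoonup x^\ast$, completing the proof. The main obstacle, and the only place where the variable metric genuinely intervenes, is controlling the cross term $\langle x^k,W^k(x_1-x_2)\rangle$ in which both the iterate and the metric move simultaneously; this is precisely why I first establish the strong operator convergence $W^k\to W$, which lets me replace the moving metric $W^k$ by the fixed limit metric $W$ at the cost of a term that vanishes because $(x^k)_{k\geq 0}$ is bounded.
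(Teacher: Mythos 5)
Your proof is correct. The paper does not prove this lemma at all --- it is quoted directly from \cite[Theorem 3.3]{combettes-vu2013} --- and your argument (boundedness from the $\alpha$-uniform lower bound, strong operator convergence $W^k\to W$ via the monotone convergence theorem for self-adjoint operators, and the Opial-type uniqueness argument in the limit metric $W$) is essentially the standard proof of that cited result, so there is nothing to compare it against within the paper itself.
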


We present the first main theorem of this manuscript. 

\begin{theorem}\label{cong-it} Consider the setting  of Problem \ref{pr-mon} and assume that the set of primal-dual solutions to the primal-dual pair of monotone inclusions \eqref{p-mon}-\eqref{d-mon} is nonempty. Let 
$(x^k,z^k,y^k)_{k\geq 0}$ be the sequence generated by Algorithm \ref{alg-mon} and assume that
$M_1^k-\frac{1}{2\eta}\id\in{\cal S_+}({\cal H}),
M_1^k\succcurlyeq M_1^{k+1}$, $M_2^k\in{\cal S_+}({\cal G}), M_2^k\succcurlyeq M_2^{k+1}$ for all $k\geq 0$.
If one of the following assumptions:
\begin{itemize}
\item[(I)] there exists $\alpha_1>0$ such that $M_1^k-\frac{1}{2\eta}\id\in{\cal P}_{\alpha_1}({\cal H})$ for all $k\geq 0$;

\item[(II)] there exist $\alpha, \alpha_2>0$ such that $L^*L\in {\cal P}_{\alpha}({\cal H})$ and $M_2^k\in {\cal P}_{\alpha_2}({\cal G})$ 
for all $k\geq 0$;
\end{itemize}
is fulfilled, then there exists $( x, v)$, a primal-dual solution to \eqref{p-mon}-\eqref{d-mon}, such that 
$(x^k,z^k,y^k)_{k\geq 0}$ converges weakly to $( x, Lx, v)$.  
\end{theorem}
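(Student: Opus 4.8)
The plan is to recast Algorithm \ref{alg-mon} as a sequence that is Fej\'er monotone in a variable metric on the product space $\mathcal{K}:=\mathcal{H}\times\mathcal{G}\times\mathcal{G}$, and to apply the variable-metric Opial Lemma \ref{opial-var}. Let $S$ be the set of all triples $(\ol x,L\ol x,\ol v)$ arising from primal-dual solutions $(\ol x,\ol v)$ of \eqref{p-mon}-\eqref{d-mon}; it is nonempty by hypothesis. The candidate metric is
$$W^k:=\begin{pmatrix} M_1^k & 0 & 0\\ 0 & M_2^k+c\,\id & 0\\ 0 & 0 & c^{-1}\id\end{pmatrix}\in{\cal S_+}(\mathcal{K}),$$
and I will verify the two hypotheses of Lemma \ref{opial-var} for $\xi^k:=(x^k,z^k,y^k)$ with respect to $S$ and $W^k$. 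Note that $W^k\in{\cal P}_\alpha(\mathcal{K})$ with $\alpha=\min\{\tfrac1{2\eta},c,c^{-1}\}$ (using the standing assumption $M_1^k\succcurlyeq\tfrac1{2\eta}\id$) and $W^k\succcurlyeq W^{k+1}$ (from $M_i^k\succcurlyeq M_i^{k+1}$).

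For hypothesis (i) I start from the equivalent inclusions \eqref{C-var-x-equiv} and \eqref{C-var-z-equiv-equiv-equiv}, namely $u^{k+1}\in Ax^{k+1}$ and $w^{k+1}\in Bz^{k+1}$ with $u^{k+1}=cL^*(z^k-Lx^{k+1}-c^{-1}y^k)+M_1^k(x^k-x^{k+1})-Cx^k$ and $w^{k+1}=y^{k+1}+M_2^k(z^k-z^{k+1})$. Fixing $s=(\ol x,\ol z,\ol v)\in S$ with $\ol z=L\ol x$, I use $(\ol x,-L^*\ol v-C\ol x)\in\gr A$ and $(\ol z,\ol v)\in\gr B$, and add the monotonicity inequality of $A$ (for $x^{k+1}$ versus $\ol x$), the monotonicity inequality of $B$ (for $z^{k+1}$ versus $\ol z$), and the $\eta$-cocoercivity of $C$. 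Writing $a^k:=x^k-\ol x$, $b^k:=z^k-\ol z$, $d^k:=y^k-\ol v$ and exploiting $y^{k+1}-y^k=c(Lx^{k+1}-z^{k+1})$, the identity $La^{k+1}=b^{k+1}+(Lx^{k+1}-z^{k+1})$, and the three-point identity $\langle p,q-p\rangle=\tfrac12\|q\|^2-\tfrac12\|p\|^2-\tfrac12\|q-p\|^2$ (and its $U$-weighted version), all mixed $L$- and $y$-terms telescope. The only term evaluated at the wrong point is $\langle a^{k+1},C\ol x-Cx^k\rangle$; combining cocoercivity with Young's inequality across the step bounds it below by $-\tfrac1{4\eta}\|a^{k+1}-a^k\|^2$, which is absorbed precisely by $M_1^k\succcurlyeq\tfrac1{2\eta}\id$. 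After replacing $M_i^k$ by $M_i^{k+1}$ on the $(k+1)$-terms, the outcome is
$$\tfrac12\|\xi^{k+1}-s\|_{W^{k+1}}^2\le\tfrac12\|\xi^k-s\|_{W^k}^2-\tfrac12\|a^k-a^{k+1}\|^2_{M_1^k-\frac1{2\eta}\id}-\tfrac12\|b^k-b^{k+1}\|^2_{M_2^k}-\tfrac c2\|z^k-Lx^{k+1}\|^2,$$
which is hypothesis (i). This step uses only the standing assumptions; conditions (I)/(II) are not yet needed.

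Hypothesis (ii) is the main obstacle and is where (I) or (II) is used. Summing the displayed inequality yields $\|z^k-Lx^{k+1}\|\to0$, $\|a^k-a^{k+1}\|_{M_1^k-\frac1{2\eta}\id}\to0$, $\|b^k-b^{k+1}\|_{M_2^k}\to0$, and boundedness of $(\xi^k)$ (via $W^k\succcurlyeq\alpha\id$). Assumption (I) upgrades the second limit directly to $x^{k+1}-x^k\to0$ strongly; assumption (II) reaches the same conclusion differently, since $\|b^k-b^{k+1}\|_{M_2^k}\to0$ forces $z^{k+1}-z^k\to0$, hence $L(x^{k+1}-x^k)\to0$, and then $x^{k+1}-x^k\to0$ because $L^*L\succcurlyeq\alpha\id$ makes $L$ bounded below. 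In either case $z^{k+1}-z^k\to0$, $y^{k+1}-y^k=c(Lx^{k+1}-z^{k+1})\to0$, and all residuals vanish strongly. The key reformulation is to introduce the operator $\mathcal{T}:=\mathcal{A}+\mathcal{S}$ on $\mathcal{H}\times\mathcal{G}$, where $\mathcal{A}(x,w):=\big((A+C)x\big)\times\big(B^{-1}w\big)$ is maximally monotone (here $A+C$ is maximally monotone since $C$ is cocoercive with full domain) and $\mathcal{S}(x,w):=(L^*w,-Lx)$ is a bounded skew operator; the zeros of $\mathcal{T}$ are exactly the primal-dual solutions. Since $u^{k+1}\in Ax^{k+1}$ and $(w^{k+1},z^{k+1})\in\gr B^{-1}$, one checks $\big((x^{k+1},w^{k+1}),\varepsilon^k\big)\in\gr\mathcal{T}$ with $\varepsilon^k:=\big(u^{k+1}+Cx^{k+1}+L^*w^{k+1},\,z^{k+1}-Lx^{k+1}\big)$, and $\varepsilon^k\to0$ strongly because both $u^{k+1}+Cx^{k+1}+L^*y^k$ and $w^{k+1}-y^k$ are combinations of the strongly vanishing increments. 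Along a weakly convergent subsequence $(x^{k_j},z^{k_j},y^{k_j})\rightharpoonup(x^*,z^*,y^*)$ one has $(x^{k_j+1},w^{k_j+1})\rightharpoonup(x^*,y^*)$ and $z^*=Lx^*$ (from $Lx^{k+1}-z^{k+1}\to0$); since the graph of the maximally monotone operator $\mathcal{T}$ is closed in the weak-strong topology, $0\in\mathcal{T}(x^*,y^*)$, so $(x^*,y^*)$ is a primal-dual solution and $(x^*,z^*,y^*)\in S$.

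With both hypotheses established, Lemma \ref{opial-var} delivers weak convergence of $(x^k,z^k,y^k)$ to some $(\ol x,L\ol x,\ol v)\in S$, which is the assertion. The delicate point throughout is hypothesis (ii): transferring merely weak convergence of the iterates through the nonlinear inclusions for $A$ and $B$. Packaging the coupling into the single maximally monotone operator $\mathcal{A}+\mathcal{S}$ and invoking its weak-strong graph closedness circumvents the need for weak-to-weak closedness of individual graphs, while the strong vanishing of the increments — for which precisely one of (I) or (II) is required — is exactly what forces $\varepsilon^k\to0$.
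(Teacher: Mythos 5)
Your proposal is correct and follows essentially the same route as the paper: the same variable-metric Fej\'er inequality with $W^k=(M_1^k,M_2^k+c\,\id,c^{-1}\id)$, the same absorption of the cocoercivity term by $M_1^k\succcurlyeq\frac{1}{2\eta}\id$, the same case-by-case derivation of the vanishing increments under (I) and (II), and the same identification of weak cluster points via the maximally monotone operator $(x,w)\mapsto\big((A+C)x\big)\times B^{-1}(w)$ plus the skew coupling and its weak--strong graph closedness, before invoking the variable-metric Opial Lemma.
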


\begin{proof} Let $S\subseteq {\cal H}\times {\cal G}\times {\cal G}$ be defined by 
\begin{equation}\label{def-s}S=\{(x,Lx,v):(x,v)\mbox{ is a primal dual solution to \eqref{p-mon}-\eqref{d-mon}}\}.\end{equation}

Let  $(x^*,Lx^*,y^*)\in S$ be fixed. Then it holds
$$-L^*y^*-Cx^*\in Ax^* \ \mbox{and} \  y^*\in B(Lx^*). $$
Let $k\geq 0$ be fixed. 
From \eqref{C-var-x-equiv} and the monotonicity of $A$ we have
\begin{equation}\label{mon-a}
\langle cL^*(z^k-Lx^{k+1}-c^{-1}y^k)+M_1^k(x^k-x^{k+1})-Cx^k+L^*y^*+Cx^*,x^{k+1}-x^*\rangle\geq 0, \end{equation}
while from \eqref{C-var-z-equiv} and the monotonicity of $B$ we have
\begin{equation}\label{mon-b}\langle c(Lx^{k+1}-z^{k+1}+c^{-1}y^k)+M_2^k(z^k-z^{k+1})-y^*,z^{k+1}-Lx^*\rangle\geq 0.\end{equation}
Since $C$ is $\eta$-cocoercive, we have  
$$\langle Cx^*-Cx^k,x^*-x^k\rangle\geq \eta\|Cx^*-Cx^k\|^2.$$

Summing up the three inequalities obtained above we get
\begin{align*}
c\langle z^k-Lx^{k+1},Lx^{k+1}-Lx^*\rangle+\langle y^*-y^k,Lx^{k+1}-Lx^*\rangle+\langle Cx^*-Cx^k,x^{k+1}-x^*\rangle &\\
+\langle M_1^k(x^k-x^{k+1}),x^{k+1}-x^*\rangle+c\langle Lx^{k+1}-z^{k+1},z^{k+1}-Lx^*\rangle+\langle y^k-y^*,z^{k+1}-Lx^*\rangle&\\
+\langle M_2^k(z^k-z^{k+1}),z^{k+1}-Lx^*\rangle+\langle Cx^*-Cx^k,x^*-x^k\rangle-\eta\|Cx^*-Cx^k\|^2 & \geq 0.
\end{align*}
According to \eqref{C-var-y} we also have
$$\langle y^*-y^k,Lx^{k+1}-Lx^*\rangle+\langle y^k-y^*,z^{k+1}-Lx^*\rangle=\langle y^*-y^k,Lx^{k+1}-z^{k+1}\rangle=
c^{-1}\langle y^*-y^k,y^{k+1}-y^k\rangle.$$
By expressing the inner products through norms we further derive
\begin{align*}
\frac{c}{2}\left(\|z^k-Lx^*\|^2-\|z^k-Lx^{k+1}\|^2-\|Lx^{k+1}-Lx^*\|^2\right) & \\
+ \frac{c}{2}\left(\|Lx^{k+1}-Lx^*\|^2-\|Lx^{k+1}-z^{k+1}\|^2-\|z^{k+1}-Lx^*\|^2\right) & \\
+\frac{1}{2c}\left(\|y^*-y^k\|^2+\|y^{k+1}-y^k\|^2-\|y^{k+1}-y^*\|^2\right)&\\
+\frac{1}{2}\left(\|x^k-x^*\|_{M_1^k}^2-\|x^k-x^{k+1}\|_{M_1^k}^2-\|x^{k+1}-x^*\|_{M_1^k}^2\right)&\\
+\frac{1}{2}\left(\|z^k-Lx^*\|_{M_2^k}^2-\|z^k-z^{k+1}\|_{M_2^k}^2-\|z^{k+1}-Lx^*\|_{M_2^k}^2\right)&\\
+\langle Cx^*- Cx^k,x^{k+1}-x^k\rangle-\eta\|Cx^*-Cx^k\|^2& \geq 0.
\end{align*}
By expressing $Lx^{k+1}-z^{k+1}$ using again relation \eqref{C-var-y} and by taking into account that
\begin{align*}
\langle Cx^*-Cx^k,x^{k+1}-x^k\rangle-\eta\|Cx^*-Cx^k\|^2 & =\\
-\eta^{-1}\left\|\eta\left(Cx^*-Cx^k\right)+\frac{1}{2}\left(x^k-x^{k+1}\right)\right\|^2+\frac{1}{4\eta}\|x^k-x^{k+1}\|^2&,
\end{align*}
we obtain 
\begin{align*}\frac{1}{2}\|x^{k+1}-x^*\|_{M_1^k}^2+\frac{1}{2}\|z^{k+1}-Lx^*\|_{M_2^k+c\id}^2+\frac{1}{2c}\|y^{k+1}-y^*\|^2 & \leq\\
\frac{1}{2}\|x^k-x^*\|_{M_1^k}^2+\frac{1}{2}\|z^k-Lx^*\|_{M_2^k+c\id}^2+\frac{1}{2c}\|y^k-y^*\|^2 & \\
-\frac{c}{2}\|z^k-Lx^{k+1}\|^2-\frac{1}{2}\|x^k-x^{k+1}\|_{M_1^k}^2-\frac{1}{2}\|z^k-z^{k+1}\|_{M_2^k}^2 &\\
-\eta^{-1}\left\|\eta\left(Cx^*-Cx^k\right)+\frac{1}{2}\left(x^k-x^{k+1}\right)\right\|^2+\frac{1}{4\eta}\|x^k-x^{k+1}\|^2.
\end{align*}
From here,  using the monotonicity assumptions on $(M_1^k)_{k \geq 0}$ and $(M_2^k)_{k \geq 0}$, it yields
\begin{align}\frac{1}{2}\|x^{k+1}-x^*\|_{M_1^{k+1}}^2+\frac{1}{2}\|z^{k+1}-Lx^*\|_{M_2^{k+1}+c\id}^2+\frac{1}{2c}\|y^{k+1}-y^*\|^2 & \leq \nonumber\\
\frac{1}{2}\|x^k-x^*\|_{M_1^k}^2+\frac{1}{2}\|z^k-Lx^*\|_{M_2^k+c\id}^2+\frac{1}{2c}\|y^k-y^*\|^2 & \nonumber \\
-\frac{c}{2}\|z^k-Lx^{k+1}\|^2-\frac{1}{2}\|x^k-x^{k+1}\|_{M_1^k-\frac{1}{2\eta}\id}^2-\frac{1}{2}\|z^k-z^{k+1}\|_{M_2^k}^2 & \nonumber\\
-\eta^{-1}\left\|\eta\left(Cx^*-Cx^k\right)+\frac{1}{2}\left(x^k-x^{k+1}\right)\right\|^2&.\label{fey-var}
\end{align}
Discarding the negative terms on the right-hand side of the above inequality 
(notice that $M_1^k-\frac{1}{2\eta}\id\in {\cal S_+}({\cal H})$ for all $k \geq 0$), 
it follows that statement (i) in Opial Lemma (Lemma \ref{opial-var}) holds, when applied in the product space 
${\cal H}\times {\cal G}\times {\cal G}$, for the sequence $(x^k,z^k,y^k)_{k\geq 0}$, for 
$W^k:= (M_1^k,M_2^k+c\id,c^{-1}\id)$ for $k \geq 0$, and for $S$ defined as in \eqref{def-s}.

Furthermore, summing up the inequalities in \eqref{fey-var}, we get  
\begin{equation}\label{series}\sum_{k\geq 0}\|z^k-Lx^{k+1}\|^2<+\infty, \ \sum_{k\geq 0}\|x^k-x^{k+1}\|_{M_1^k-\frac{1}{2\eta}\id}^2<+\infty, \ \sum_{k\geq 0}\|z^k-z^{k+1}\|_{M_2^k}^2<+\infty.\end{equation}

Consider first the hypotheses in assumption (I). Since $M_1^k-\frac{1}{2\eta}\id\in{\cal P}_{\alpha_1}({\cal H})$ for all $k \geq 0$ with $\alpha_1>0$, we get 
\begin{equation}\label{x-x-0} x^k-x^{k+1}\rightarrow 0 \ (k\rightarrow+\infty)
\end{equation}
and \begin{equation}\label{z-Ax-0} z^k-Lx^{k+1}\rightarrow 0 \ (k\rightarrow+\infty).
\end{equation}
A direct consequence of \eqref{x-x-0} and \eqref{z-Ax-0} is 
\begin{equation}\label{z-z-0} z^k-z^{k+1}\rightarrow 0 \ (k\rightarrow+\infty). 
\end{equation}
From \eqref{C-var-y}, \eqref{z-Ax-0} and \eqref{z-z-0} we derive 
\begin{equation}\label{y-y-0} y^k-y^{k+1}\rightarrow 0 \ (k\rightarrow+\infty). 
\end{equation}

Next we show that the relations \eqref{x-x-0}-\eqref{y-y-0} are fulfilled also under assumption (II). Indeed, in this situation we derive 
from \eqref{series} that \eqref{z-Ax-0} and \eqref{z-z-0} hold. 
From \eqref{C-var-y}, \eqref{z-Ax-0} and \eqref{z-z-0} we obtain \eqref{y-y-0}. Finally, 
the inequalities 
\begin{equation}\label{ineq}\alpha\|x^{k+1}-x^k\|^2\leq \|Lx^{k+1}-Lx^k\|^2\leq 2\|Lx^{k+1}-z^k\|^2+2\|z^k-Lx^k\|^2 \ \forall k\geq 0\end{equation}
yield \eqref{x-x-0}.

The relations \eqref{x-x-0}-\eqref{y-y-0} will play an essential role when verifying  assumption (ii) in the Opial Lemma for 
$S$ taken as in \eqref{def-s}. Let $(\ol x,\ol z,\ol y)\in {\cal H}\times {\cal G}\times {\cal G}$ be such that 
there exists $(k_n)_{n\geq 0}$, $k_n\rightarrow +\infty$ (as $n\rightarrow +\infty$), and $(x^{k_n},z^{k_n}, y^{k_n})$ converges weakly to 
$(\ol x,\ol z,\ol y)$ (as $n\rightarrow +\infty$). 

From \eqref{x-x-0} and the linearity and the continuity of $L$ we obtain that $(Lx^{k_n+1})_{n\in\N}$ converges weakly 
to $L\ol x$ (as $n\rightarrow +\infty$), which combined with \eqref{z-Ax-0} yields $\ol z=L\ol x$. 
We use now the following notations for $n\geq 0$:
\begin{align*} 
a_n^* := & \ cL^*(z^{k_n}-Lx^{k_n+1}-c^{-1}y^{k_n})+M_1^{k_n}(x^{k_n}-x^{k_n+1})+Cx^{k_n+1}-Cx^{k_n}\\
a_n := & \ x^{k_n+1}\\
b_n^*:= &\  y^{k_n+1}+M_2^{k_n}(z^{k_n}-z^{k_n+1})\\
b_n:= & \ z^{k_n+1}.
\end{align*}
From \eqref{C-var-x-equiv} we have for all $n\geq 0$
\begin{equation}\label{a-a}a_n^*\in (A+C)(a_n).\end{equation}
Further, from \eqref{C-var-z-equiv} and \eqref{C-var-y} we have for all $n\geq 0$
\begin{equation}\label{b-b}b_n^*\in Bb_n.\end{equation}
Furthermore, from \eqref{x-x-0} we have 
\begin{equation}\label{an-w}a_n \mbox{ converges weakly to } \ol x \ (\mbox{as }n\rightarrow+\infty). 
\end{equation}
From \eqref{y-y-0} and \eqref{z-z-0} we obtain 
\begin{equation}\label{bn-w}b_n^* \mbox{ converges weakly to } \ol y \ (\mbox{as }n\rightarrow+\infty). 
\end{equation}
Moreover, \eqref{C-var-y} and \eqref{y-y-0} yield 
\begin{equation}\label{Aan-bn}La_n-b_n \mbox{ converges strongly to } 0 \ (\mbox{as }n\rightarrow+\infty). 
\end{equation}
Finally, we have 
\begin{align*}
a_n^*+L^*b_n^*= & \ cL^*(z^{k_n}-Lx^{k_n+1})+L^*(y^{k_n+1}-y^{k_n})\\
& +M_1^{k_n}(x^{k_n}-x^{k_n+1}) +L^*M_2^{k_n}(z^{k_n}-z^{k_n+1})\\
& +Cx^{k_n+1}-Cx^{k_n}.
\end{align*}
By using the fact that $C$ is $\eta^{-1}$-Lipschitz continuous, from \eqref{x-x-0}-\eqref{y-y-0} we get 
\begin{equation}\label{an-Abn}a_n^*+L^*b_n^* \mbox{ converges strongly to } 0 \ (\mbox{as }n\rightarrow+\infty). 
\end{equation}
Let us define $T:{\cal H}\times {\cal G}\rightrightarrows {\cal H}\times {\cal G}$ by $T(x,y)=(A(x)+C(x)) \times B^{-1}(y)$ and 
$K:{\cal H}\times {\cal G}\rightarrow {\cal H}\times {\cal G}$ by $K(x,y)=(L^*y,-Lx)$ for all 
$(x,y)\in{\cal H}\times {\cal G}$. Since $C$ is maximally monotone with full domain (see \cite{bauschke-book}), 
$A+C$ is maximally monotone, too (see \cite{bauschke-book}), thus $T$ is maximally monotone. Since 
$K$ is s skew operator, it is also maximally monotone (see \cite{bauschke-book}). Due to the fact that $K$ has full domain, we conclude 
that \begin{equation}\label{t-plus-k}T+K \mbox{ is a maximally monotone operator}.\end{equation}
Moreover, from \eqref{a-a} and \eqref{b-b} we have 
\begin{equation}\label{seq-prod-sp}(a_n^*+L^*b_n^*,b_n-La_n)\in (T+K)(a_n,b_n^*) \ \forall n\geq 0.\end{equation}
Since the graph of a maximally monotone operator is sequentially closed with respect to the weak$\times$strong topology 
(see \cite[Proposition 20.33]{bauschke-book}), from \eqref{t-plus-k}, \eqref{seq-prod-sp}, \eqref{an-w}, \eqref{bn-w}, \eqref{Aan-bn} 
and \eqref{an-Abn} we derive that 
$$(0,0)\in (T+K)(\ol x,\ol y)=(A+C,B^{-1})(\ol x,\ol y)+(L^*\ol y,-L\ol x).$$
The latter is nothing else than saying that $(\ol x,\ol y)$ is a primal dual-solution to \eqref{p-mon}-\eqref{d-mon}, which combined 
with $\ol z=L\ol x$ implies that the second assumption of the Opial Lemma is verified, too. 
In conclusion, $(x^k,z^k,y^k)_{k\geq 0}$ converges weakly to $(x,Lx,v)$, where $(x,v)$ a primal-dual solution to \eqref{p-mon}-\eqref{d-mon}.
\end{proof}

\begin{remark}\label{rem-th1} (i) Choosing as in Remark \ref{part-cases-alg-mon} $M_1^k:= \frac{1}{\tau_k}\id - cL^*L$, with $\tau_k >0$ and 
$\tau:=\sup_{k \geq 0} \tau_k \in \R $, and $M_2^k := 0$ for all $k \geq 0$, we have
$$\left\langle x,\left(M_1^k-\frac{1}{2\eta}\id\right)x\right\rangle\geq\left(\frac{1}{\tau_k}-c\|L\|^2-\frac{1}{2\eta}\right)\|x\|^2  
\geq \left(\frac{1}{\tau}-c\|L\|^2-\frac{1}{2\eta}\right)\|x\|^2 \ \forall x \in {\cal H},$$
which means that under the assumption $\frac{1}{\tau}-c\|L\|^2>\frac{1}{2\eta}$ 
(which recovers the one in Algorithm 3.2 and Theorem 3.1 in \cite{condat2013}) the operators $M_1^k-\frac{1}{2\eta}\id$ 
belong for all $k \geq 0$ to the class ${\cal P}_{\alpha_1}({\cal H})$ with $\alpha_1:= \frac{1}{\tau}-c\|L\|^2-\frac{1}{2\eta}>0$. 

(ii) Let us briefly discuss the condition considered in (II): 
\begin{equation}\label{h}\exists \alpha>0 \mbox{ such that } L^*L\in {\cal P}_{\alpha}({\cal H}). 
\end{equation}
By taking into account \cite[Fact 2.19]{bauschke-book}, one can see that \eqref{h} holds if and only if 
$L$ is injective and $\ran L^*$ is closed. This means that 
if $\ran L^*$ is closed, then \eqref{h} is equivalent to $L$ is injective. Hence, in finite dimensional spaces, namely, if ${\cal H}=\R^n$ and 
${\cal G}=\R^m$, with $m\geq n\geq 1$, \eqref{h}  is nothing else than saying that $L$ has full column rank, which is a widely used assumption 
in the proof of the convergence of the classical ADMM algorithm. 
\end{remark}

In the second convergence result of this section we consider the case when $C$ is identically $0$. We notice that this cannot 
be encompassed in the above theorem due to the assumptions which involve 
the cococercivity constant $\eta$ in the denominator of some fractions and which do not allow us to take it equal to zero.

\begin{theorem}\label{cong-it-sh-teb} Consider the setting of Problem \ref{pr-mon} in the situation when $C=0$ and assume that the set of primal-dual solutions to the primal-dual pair of monotone inclusions 
\eqref{p-mon}-\eqref{d-mon} is nonempty. Let $(x^k,z^k,y^k)_{k\geq 0}$ be the sequence generated by Algorithm \ref{alg-mon} and assume that $M_1^k\in{\cal S_+}({\cal H}),
M_1^k\succcurlyeq M_1^{k+1}$, $M_2^k\in{\cal S_+}({\cal G}), M_2^k\succcurlyeq M_2^{k+1}$ for all $k\geq 0$. 
If one of the following assumptions:
\begin{itemize}
\item[(I)] there exists $\alpha_1>0$ such that $M_1^k\in{\cal P}_{\alpha_1}({\cal H})$ for all $k\geq 0$;

\item[(II)] there exist $\alpha, \alpha_2>0$ such that $L^*L\in {\cal P}_{\alpha}({\cal H})$ and 
$M_2^k\in {\cal P}_{\alpha_2}({\cal G})$ for all $k\geq 0$; 

\item[(III)] there exists $\alpha>0$ such that $L^*L\in {\cal P}_{\alpha}({\cal H})$ and 
$2M_2^{k+1}\succcurlyeq M_2^{k}\succcurlyeq M_2^{k+1}$ for all $k\geq 0$;
\end{itemize}
is fulfilled, then there exists $(x, v)$, a primal-dual solution to \eqref{p-mon}-\eqref{d-mon}, such that 
$(x^k,z^k,y^k)_{k\geq 0}$ converges weakly to $(x, Lx, v)$.   
\end{theorem}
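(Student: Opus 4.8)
The plan is to run the proof of Theorem \ref{cong-it} with $C\equiv0$ and to isolate the one genuinely new case, assumption (III). First I would take $S$ as in \eqref{def-s}, fix $(x^*,Lx^*,y^*)\in S$, and repeat the estimate producing \eqref{fey-var}. With $C=0$ the cocoercivity inequality is void and every term carrying $\tfrac1{2\eta}\id$ disappears, so the Fej\'er-type inequality becomes
\begin{align*}
\tfrac12\|x^{k+1}-x^*\|_{M_1^{k+1}}^2+\tfrac12\|z^{k+1}-Lx^*\|_{M_2^{k+1}+c\id}^2+\tfrac1{2c}\|y^{k+1}-y^*\|^2 & \leq \\
\tfrac12\|x^k-x^*\|_{M_1^k}^2+\tfrac12\|z^k-Lx^*\|_{M_2^k+c\id}^2+\tfrac1{2c}\|y^k-y^*\|^2 & \\
-\tfrac c2\|z^k-Lx^{k+1}\|^2-\tfrac12\|x^k-x^{k+1}\|_{M_1^k}^2-\tfrac12\|z^k-z^{k+1}\|_{M_2^k}^2 &.
\end{align*}
Discarding the negative terms gives assumption (i) of Lemma \ref{opial-var} for $W^k:=(M_1^k,M_2^k+c\id,c^{-1}\id)$, and summation yields the counterpart of \eqref{series}. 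Under (I) the coercivity of $M_1^k$ forces $x^k-x^{k+1}\to0$ and $z^k-Lx^{k+1}\to0$, whence $z^k-z^{k+1}\to0$ and $y^k-y^{k+1}\to0$ as in Theorem \ref{cong-it}; under (II) one argues verbatim as in case (II) there, combining $L^*L\in\mathcal{P}_\alpha$, the coercivity of $M_2^k$, and \eqref{ineq}.

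The main obstacle is assumption (III), where $M_2^k$ need not be coercive, so the summability of $\|z^k-z^{k+1}\|_{M_2^k}^2$ no longer delivers $z^k-z^{k+1}\to0$. Here I would use the monotonicity of $B$ together with the rate control $2M_2^{k+1}\succcurlyeq M_2^k$. Writing \eqref{C-var-z-equiv-equiv-equiv} at indices $k$ and $k-1$ gives $y^{k+1}+M_2^k(z^k-z^{k+1})\in Bz^{k+1}$ and $y^k+M_2^{k-1}(z^{k-1}-z^k)\in Bz^k$; monotonicity of $B$, the identity $y^{k+1}-y^k=c(Lx^{k+1}-z^{k+1})$ from \eqref{C-var-y}, and the abbreviations $d_k:=z^k-z^{k+1}$, $w_k:=Lx^{k+1}-z^k$, $u_k:=Lx^{k+1}-z^{k+1}=c^{-1}(y^{k+1}-y^k)$ (so that $d_k=u_k-w_k$) lead to
$$c\langle u_k,d_k\rangle\leq-\|d_k\|_{M_2^k}^2+\langle M_2^{k-1}d_{k-1},d_k\rangle.$$
Estimating $\langle M_2^{k-1}d_{k-1},d_k\rangle$ by Cauchy--Schwarz in the $M_2^{k-1}$-seminorm, using $\|d_k\|_{M_2^{k-1}}^2\leq2\|d_k\|_{M_2^k}^2$ (a consequence of $2M_2^k\succcurlyeq M_2^{k-1}$) and Young's inequality, and then absorbing $\langle u_k,w_k\rangle$, I expect to obtain
$$\|u_k\|^2\leq\|w_k\|^2+\tfrac2c\|d_{k-1}\|_{M_2^{k-1}}^2.$$
Since $w_k\to0$ and $\|d_{k-1}\|_{M_2^{k-1}}\to0$, this forces $u_k\to0$, i.e. $y^k-y^{k+1}\to0$, hence $d_k=u_k-w_k\to0$; finally \eqref{ineq} and $L^*L\in\mathcal{P}_\alpha$ give $x^k-x^{k+1}\to0$.

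With asymptotic regularity secured in all three cases, assumption (ii) of the Opial Lemma is verified exactly as in Theorem \ref{cong-it} specialized to $C\equiv0$: setting $T(x,y)=A(x)\times B^{-1}(y)$ and $K(x,y)=(L^*y,-Lx)$, the sequences $a_n=x^{k_n+1}$, $a_n^*=cL^*(z^{k_n}-Lx^{k_n+1}-c^{-1}y^{k_n})+M_1^{k_n}(x^{k_n}-x^{k_n+1})$, $b_n=z^{k_n+1}$, $b_n^*=y^{k_n+1}+M_2^{k_n}(z^{k_n}-z^{k_n+1})$ satisfy $(a_n^*+L^*b_n^*,b_n-La_n)\in(T+K)(a_n,b_n^*)$, and the regularity relations together with the boundedness of $(M_1^k)$ and $(M_2^k)$ (which gives $M_2^{k_n}(z^{k_n}-z^{k_n+1})\to0$ and $M_1^{k_n}(x^{k_n}-x^{k_n+1})\to0$) yield $a_n\rightharpoonup\bar x$, $b_n^*\rightharpoonup\bar y$, $La_n-b_n\to0$, $a_n^*+L^*b_n^*\to0$; weak--strong closedness of the graph of the maximally monotone operator $T+K$ then places $(\bar x,L\bar x,\bar y)$ in $S$. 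To conclude, under (I) the metric $W^k$ is coercive and Lemma \ref{opial-var} applies directly. Under (II) and (III) the operator $M_1^k$ may be singular, so $W^k$ need not lie in $\mathcal{P}_\alpha$; here I would use that $L^*L\in\mathcal{P}_\alpha$ makes $L$ bounded below, hence injective, and observe that the Fej\'er estimate still guarantees that $\lim_k\|(x^k,z^k,y^k)-s\|_{W^k}$ exists for each $s\in S$ along a bounded sequence. Running the two-cluster-point argument behind Lemma \ref{opial-var} pins down the $\mathcal{G}\times\mathcal{G}$-components of any two weak cluster points, and injectivity of $L$ pins down the $\mathcal{H}$-component, so the whole sequence converges weakly to a single $(x,Lx,v)\in S$. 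The crux is thus case (III): the condition $2M_2^{k+1}\succcurlyeq M_2^k$ is precisely what closes the monotonicity estimate that forces $y^k-y^{k+1}\to0$, while the possibly degenerate $x$-metric is circumvented through the injectivity of $L$.
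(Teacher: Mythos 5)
Your proposal is correct and follows the paper's strategy in all essentials: the same Fej\'er inequality \eqref{fey-var-h0}, the same treatment of (I) via Lemma \ref{opial-var} and of (II) via \eqref{ineq} together with the coercivity of $M_2^k$, and the same key ingredient for (III), namely the monotonicity of $B$ applied to \eqref{C-var-z-equiv-equiv-equiv} at consecutive iterates combined with $2M_2^{k}\succcurlyeq M_2^{k-1}$. The only genuine divergence is how that ingredient is exploited in case (III). The paper adds the resulting inequality to \eqref{fey-var-h0} to build an augmented Lyapunov sequence $E(x^k,z^k,y^k;x^*,Lx^*,y^*)+\tfrac12\|z^k-z^{k-1}\|_{M_2^{k-1}}^2$ whose decrease carries the genuinely negative terms $-\tfrac{c}{2}\|z^{k+1}-z^k\|^2-\tfrac{1}{2c}\|y^{k+1}-y^k\|^2$, and then telescopes; you instead absorb the cross terms pointwise to obtain $\|u_k\|^2\le\|w_k\|^2+\tfrac{2}{c}\|d_{k-1}\|_{M_2^{k-1}}^2$ and feed in the summability already delivered by \eqref{fey-var-h0}. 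Your estimate does check out: with the splitting $\langle M_2^{k-1}d_{k-1},d_k\rangle\le\|d_{k-1}\|_{M_2^{k-1}}^2+\tfrac12\|d_k\|_{M_2^k}^2$ the half-term is dominated by $-\|d_k\|_{M_2^k}^2$, and $c\langle u_k,w_k\rangle\le\tfrac{c}{2}\|u_k\|^2+\tfrac{c}{2}\|w_k\|^2$ closes the argument, so this is a slightly more economical route to the asymptotic regularity. The one place where you are thinner than the paper is the final uniqueness-of-cluster-points step under (II) and (III): ``running the argument behind Lemma \ref{opial-var}'' requires knowing that the variable metrics converge, and this is where the paper invests real work --- pointwise strong convergence of the nonincreasing sequence $(M_1^k)_{k\ge0}$ to some $M_1$ via a theorem of Riesz and Sz.-Nagy, and of $(M_2^k+c\id)_{k\ge0}$ to some $M_2\in{\cal P}_{\alpha'}({\cal G})$ via a lemma of Combettes and V\~{u} --- before the energy-difference computation yields $Lx_1=Lx_2$ and $y_1=y_2$ and the injectivity of $L$ finishes. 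That step should be spelled out, but it is a matter of detail rather than a missing idea.
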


\begin{proof}  We fix an element $(x^*,Lx^*,y^*)$ with the property that $(x^*, y^*)$ a primal-dual solution to 
\eqref{p-mon}-\eqref{d-mon}. 

Take an arbitrary $k \geq 0$. 
As in the proof of Theorem \ref{cong-it}, we derive the inequality 
\begin{align}\frac{1}{2}\|x^{k+1}-x^*\|_{M_1^{k+1}}^2+\frac{1}{2}\|z^{k+1}-Lx^*\|_{M_2^{k+1}+c\id}^2+\frac{1}{2c}\|y^{k+1}-y^*\|^2 & \leq \nonumber\\
 \frac{1}{2}\|x^k-x^*\|_{M_1^k}^2+\frac{1}{2}\|z^k-Lx^*\|_{M_2^k+c\id}^2+\frac{1}{2c}\|y^k-y^*\|^2 &\nonumber\\
 -\frac{c}{2}\|z^k-Lx^{k+1}\|^2-\frac{1}{2}\|x^k-x^{k+1}\|_{M_1^k}^2-\frac{1}{2}\|z^k-z^{k+1}\|_{M_2^k}^2.& \label{fey-var-h0}
\end{align}
Under assumption (I) the conclusion follows as in the proof of Theorem \ref{cong-it} by making use of the Opial Lemma. 

Consider the situation when the hypotheses in assumption (II) are fulfilled. 

By using telescopic sum techniques, it follows that \eqref{z-Ax-0} and \eqref{z-z-0} hold. From \eqref{C-var-y}, \eqref{z-Ax-0} and \eqref{z-z-0} we obtain 
\eqref{y-y-0}. Finally, by using again the inequality \eqref{ineq}, relation \eqref{x-x-0} holds, too. 

On the other hand, \eqref{fey-var-h0} yields that 
\begin{equation}\label{exist-lim}  \exists\lim_{k\rightarrow+\infty}\left(\frac{1}{2}\|x^k-x^*\|_{M_1^k}^2+
\frac{1}{2}\|z^k-Lx^*\|_{M_2^k+c\id}^2+\frac{1}{2c}\|y^k-y^*\|^2\right),
\end{equation}
hence $(y^k)_{k\geq 0}$ and $(z^k)_{k\geq 0}$ are bounded. Combining this with the condition 
imposed on $L$, we derive that $(x^k)_{k\geq 0}$ is bounded, too. Hence there exists a weak convergent subsequence 
of $(x^k,z^k,y^k)_{k\geq 0}$. By using the same arguments as in the second part of the proof of Theorem \ref{cong-it}, one can see that 
every sequential weak cluster point of $(x^k,z^k,y^k)_{k\geq 0}$ belongs to the set $S$ defined in \eqref{def-s}. 

In the remaining of the proof we show that the set of sequential weak cluster points of $(x^k,z^k,y^k)_{k\geq 0}$ is a singleton. 
Let $(x_1,z_1,y_1),(x_2,z_2,y_2)$ be two such sequential weak cluster points. Then there exist $(k_p)_{p\geq 0}, (k_q)_{q\geq 0}$, 
$k_p\rightarrow+\infty$ (as $p\rightarrow+\infty$), $k_q\rightarrow+\infty$ (as $q\rightarrow+\infty$), a subsequence
$(x^{k_p},z^{k_p}, y^{k_p})_{p \geq 0}$ which converges weakly to $(x_1,z_1,y_1)$ (as $p\rightarrow+\infty$), and a subsequence
$(x^{k_q},z^{k_q}, y^{k_q})_{q \geq 0}$ which converges weakly to $(x_2,z_2,y_2)$ (as $q\rightarrow+\infty$). As shown above, 
$(x_1,z_1,y_1)$ and 
$(x_2,z_2,y_2)$ belong to the set $S$ (see \eqref{def-s}), thus $z_i=Lx_i$, $i\in\{1,2\}$. From \eqref{exist-lim}, which is true 
for every primal-dual solution to \eqref{p-mon}-\eqref{d-mon}, we derive 
\begin{equation}\label{exist-lim1}
\exists\lim_{k\rightarrow+\infty}\left(E(x^k,z^k,y^k; x_1,Lx_1,y_1)-E(x^k,z^k,y^k; x_2,Lx_2,y_2)\right),
\end{equation}
where, for $(x^*,Lx^*,y^*)$ the expression $E(x^k,z^k,y^k; x^*,Lx^*,y^*)$ is defined as 
$$E(x^k,z^k,y^k; x^*,Lx^*,y^*)=\frac{1}{2}\|x^k-x^*\|_{M_1^k}^2+
\frac{1}{2}\|z^k-Lx^*\|_{M_2^k+c\id}^2+\frac{1}{2c}\|y^k-y^*\|^2.$$
Further, we have for all $k \geq 0$
$$\frac{1}{2}\|x^k-x_1\|_{M_1^k}^2-\frac{1}{2}\|x^k-x_2\|_{M_1^k}^2=\frac{1}{2}\|x_2-x_1\|_{M_1^k}^2
+\langle x^k-x_2,M_1^k(x_2-x_1)\rangle,$$
$$\frac{1}{2}\|z^k-Lx_1\|_{M_2^k+c\id}^2\!\!-\frac{1}{2}\|z^k-Lx_2\|_{M_2^k+c\id}^2 \!\!=\!\!\frac{1}{2}\|Lx_2-Lx_1\|_{M_2^k+c\id}^2
\!\!+\langle z^k-Lx_2, (M_2^k+c\id)(Lx_2-Lx_1)\rangle,$$
and
$$\frac{1}{2c}\|y^k-y_1\|^2-\frac{1}{2c}\|y^k-y_2\|^2=\frac{1}{2c}\|y_2-y_1\|^2
+\frac{1}{c}\langle y^k-y_2, y_2-y_1\rangle.$$

Applying \cite[Th\'{e}or\`{e}ème 104.1]{rn}, there exists $M_1\in{\cal S_+}({\cal H})$ such that 
$(M_1^k)_{k \geq 0}$ converges pointwise to $M_1$ in the strong topology (as $k\rightarrow+\infty$). Similarly, 
the monotonicity condition imposed on $(M_2^k)_{k \geq 0}$ implies that $\sup_{k\geq 0}\|M_2^k+c\id\|<+\infty$. 
Thus, according to \cite[Lemma 2.3]{combettes-vu2013}, there exists $\alpha'>0$  and $M_2\in {\cal P}_{\alpha'}({\cal G})$ 
such that $(M_2^k+c\id)_{k \geq 0}$ converges pointwise to $M_2$ in the strong topology (as $k\rightarrow+\infty$). 

Taking the limit in \eqref{exist-lim1} along the subsequences $(k_p)_{p\geq 0}$ and $(k_q)_{q\geq 0}$ and using the last three relations 
above we obtain 
$$\frac{1}{2}\|x_1-x_2\|_{M_1}^2+\langle x_1-x_2,M_1(x_2-x_1)\rangle+ \frac{1}{2}\|Lx_1-Lx_2\|_{M_2}^2+\langle Lx_1-Lx_2, M_2(Lx_2-Lx_1)\rangle$$
$$+\frac{1}{2c}\|y_1-y_2\|^2+\frac{1}{c}\langle y_1-y_2, y_2-y_1\rangle
=\frac{1}{2}\|x_1-x_2\|_{M_1}^2+\frac{1}{2}\|Lx_1-Lx_2\|_{M_2}^2+\frac{1}{2c}\|y_1-y_2\|^2,$$
hence $$-\|x_1-x_2\|_{M_1}^2-\|Lx_1-Lx_2\|_{M_2}^2-\frac{1}{c}\|y_1-y_2\|^2=0,$$
thus $Lx_1=Lx_2$ and $y_1=y_2$. The condition on $L$ imposed in Assumption (II) implies that $x_1=x_2$.  
In conclusion, $(x^k,z^k,y^k)_{k\geq 0}$ converges weakly to an element in $S$ (see\eqref{def-s}). 

Finally, we consider the situation when the hypotheses in assumption (III) hold. 

As noticed above, relation 
\eqref{fey-var-h0} holds. Let $k\geq 1$ be fixed. By considering the relation \eqref{C-var-z-equiv-equiv-equiv} for consecutive iterates 
and by taking into account the monotonicity of $B$ we derive 
$$\langle z^{k+1}-z^k,y^{k+1}-y^k+M_2^k(z^k-z^{k+1})-M_2^{k-1}(z^{k-1}-z^k)\rangle\geq 0,$$
hence 
\begin{align*}
\langle z^{k+1}-z^k,y^{k+1}-y^k\rangle & \geq \|z^{k+1}-z^k\|_{M_2^k}^2+\langle z^{k+1}-z^k,M_2^{k-1}(z^{k-1}-z^k)\rangle\\
& \geq \|z^{k+1}-z^k\|_{M_2^k}^2-\frac{1}{2}\|z^{k+1}-z^k\|_{M_2^{k-1}}^2-\frac{1}{2}\|z^{k}-z^{k-1}\|_{M_2^{k-1}}^2.
\end{align*}
Substituting $y^{k+1}-y^k=c(Lx^{k+1}-z^{k+1})$ in the last inequality it follows
\begin{align*}
 \|z^{k+1}-z^k\|_{M_2^k}^2-\frac{1}{2}\|z^{k+1}-z^k\|_{M_2^{k-1}}^2-\frac{1}{2}\|z^{k}-z^{k-1}\|_{M_2^{k-1}}^2 & \leq\\
 \frac{c}{2}\left(\|z^k-Lx^{k+1}\|^2-\|z^{k+1}-z^k\|^2-\|Lx^{k+1}-z^{k+1}\|^2\right),
\end{align*}
which, after adding it with \eqref{fey-var-h0}, leads to
\begin{align}\frac{1}{2}\|x^{k+1}-x^*\|_{M_1^{k+1}}^2+\frac{1}{2}\|z^{k+1}-Lx^*\|_{M_2^{k+1}+c\id}^2+\frac{1}{2c}\|y^{k+1}-y^*\|^2 +
\frac{1}{2}\|z^{k+1}-z^k\|_{3M_2^k-M_2^{k-1}}^2& \!\!\leq \nonumber\\
 \frac{1}{2}\|x^{k}-x^*\|_{M_1^{k}}^2+\frac{1}{2}\|z^k-Lx^*\|_{M_2^k+c\id}^2+\frac{1}{2c}\|y^k-y^*\|^2+\frac{1}{2}\|z^k-z^{k-1}\|_{M_2^{k-1}}^2&\nonumber\\
 -\frac{1}{2}\|x^{k+1}-x^k\|_{M_1^k}^2-\frac{c}{2}\|z^{k+1}-z^k\|^2 - \frac{1}{2c}\|y^{k+1}-y^k\|^2.& \label{fey-var-h0-M10-III}
\end{align}
Taking into account that according to (III) we have $3M_2^k-M_2^{k-1}\succcurlyeq M_2^k$, we can conclude that for all $k \geq 1$ it holds
\begin{align}\frac{1}{2}\|x^{k+1}-x^*\|_{M_1^{k+1}}^2+\frac{1}{2}\|z^{k+1}-Lx^*\|_{M_2^{k+1}+c\id}^2+\frac{1}{2c}\|y^{k+1}-y^*\|^2 +
\frac{1}{2}\|z^{k+1}-z^k\|_{M_2^k}^2& \!\!\leq \nonumber\\
 \frac{1}{2}\|x^{k}-x^*\|_{M_1^{k}}^2+\frac{1}{2}\|z^k-Lx^*\|_{M_2^k+c\id}^2+\frac{1}{2c}\|y^k-y^*\|^2+\frac{1}{2}\|z^k-z^{k-1}\|_{M_2^{k-1}}^2&\nonumber\\
 -\frac{1}{2}\|x^{k+1}-x^k\|_{M_1^k}^2-\frac{c}{2}\|z^{k+1}-z^k\|^2 - \frac{1}{2c}\|y^{k+1}-y^k\|^2.& \label{fey-var-h0-M10-III'}
\end{align}
Using telescoping sum arguments, we obtain that $y^k - y^{k+1} \rightarrow 0$ and $z^k - z^{k+1} \rightarrow 0$ as $k \rightarrow +\infty$. Using \eqref{C-var-y}, it follows that
$L(x^{k}-x^{k+1}) \rightarrow 0$ as $k \rightarrow +\infty$, which, combined with the hypotheses imposed to $L$, further implies that $x^{k}-x^{k+1} \rightarrow 0$ as $k \rightarrow +\infty$.
Consequently, $z^k - Lx^{k+1} \rightarrow 0$ as $k \rightarrow +\infty$. Hence the relations \eqref{x-x-0}-\eqref{y-y-0} are fulfilled. 
On the other hand, from \eqref{fey-var-h0-M10-III'} we also derive that 
\begin{equation}\label{exist-lim'} 
\exists\lim_{k\rightarrow+\infty}\left(\frac{1}{2}\|x^{k}-x^*\|_{M_1^{k}}^2+\frac{1}{2}\|z^k-Lx^*\|_{M_2^k+c\id}^2+\frac{1}{2c}\|y^k-y^*\|^2+\frac{1}{2}\|z^k-z^{k-1}\|_{M_2^{k-1}}^2\right).
\end{equation}
By using that 
$$\|z^k-z^{k-1}\|_{M_2^{k-1}}^2 \leq \|z^k-z^{k-1}\|_{M_2^{0}}^2 \leq \|M_2^0\| \|z^k-z^{k-1}\|^2 \ \forall k \geq 1,$$
it follows that $\lim_{k\rightarrow+\infty} \|z^k-z^{k-1}\|_{M_2^{k-1}}^2 = 0$, which further implies that \eqref{exist-lim} holds. From here the conclusion follows by arguing as in the second part of 
the proof provided in the case when assumption (II) is fulfilled.
\end{proof} 

\begin{remark} In the finite dimensional variational case, the sequences generated by the classical ADMM algorithm, which corresponds to the iterative 
scheme \eqref{h-var-x}-\eqref{h-var-y} for $h=0$ and $M_1^k=M_2^k=0$ for all $k\geq 0$, are convergent, provided that $L$ has full column rank. 
This situation is covered by the theorem above in the context of assumption (III).
\end{remark}

\section{Convergence rates under strong monotonicity and by means of dynamic step sizes}\label{sec3}

We state the problem on which we focus throughout this section. 

\begin{problem}\label{pr-acc} In the setting of Problem \ref{pr-mon} we replace the cocoercivity of $C$ by the assumptions that
$C$ is monotone and $\mu$-Lipschitz continuous for $\mu\geq 0$. Moreover, we assume that $A+C$ is 
$\gamma$-strongly monotone for $\gamma >0$. 
\end{problem}

\begin{remark}\label{acc1-cocoerc-Lip}
If $C$  is a $\eta$-cocoercive operator for $\eta >0$, then $C$ is monotone and $\eta^{-1}$-Lipschitz continuous. Though, the converse statement may fail. 
The skew operator $(x,y)\mapsto (L^*y,-Lx)$ is for instance monotone and Lipschitz continuous, and not cocoercive. This operator appears in a natural way when considering 
formulating the system of optimality conditions for convex optimization problems involving compositions with linear continuous operators (see \cite{br-combettes}). 
Notice that due to the celebrated Baillon-Haddad Theorem (see, for instance, 
\cite[Corollary 8.16]{bauschke-book}), the gradient of a convex and Fr\'echet differentiable function is $\eta$-cocoercive if and only if it is $\eta^{-1}$-Lipschitz continuous.
\end{remark}

\begin{remark} In the setting of Problem \ref{pr-acc} the operator $A+L^*\circ B\circ L+C$ is strongly monotone, thus the monotone inclusion problem \eqref{p-mon} has at most one solution. 
Hence, if $(x,v)$ is a primal-dual solution to the primal-dual pair \eqref{p-mon}-\eqref{d-mon}, 
then $x$ is the unique solution to \eqref{p-mon}. Notice that the problem \eqref{d-mon} 
may not have an unique solution.
\end{remark}

We propose the following algorithm for the formulation of which we use dynamic step sizes.

\begin{algorithm}\label{alg-mon-acc} For all $k \geq 0$, let $M_2^k:{\cal G}\to{\cal G}$ be a linear, continuous and self-adjoint operator  such 
that $\tau_k LL^*+M_2^k\in{\cal P}_{\alpha_k}({\cal G})$ for $\alpha_k>0$ for all $k\geq 0$. 
Choose $(x^0,z^0,y^0)\in{\cal H}\times{\cal G}\times{\cal G}$. For all $k\geq 0$ generate the sequence $(x^k,z^k,y^k)_{k \geq 0}$ as follows:
\begin{align} y^{k+1} = & \
\left(\tau_k LL^*+M_2^k+B^{-1}\right)^{-1}\left[-\tau_k L(z^k-\tau_k^{-1}x^k)+M_2^ky^k\right] \label{C-var-y-acc}\\
 z^{k+1} = & \ \left(\frac{\theta_k}{\lambda}-1\right)L^*y^{k+1} + \frac{\theta_k}{\lambda}Cx^k+ \frac{\theta_k}{\lambda}\left(\id+\lambda\tau_{k+1}^{-1}A^{-1}\right)^{-1}
\left[-L^*y^{k+1}+\lambda\tau_{k+1}^{-1}x^k-Cx^k\right] \label{C-var-z-acc}\\
x^{k+1} = & \ x^k+\frac{\tau_{k+1}}{\theta_k}\left(-L^*y^{k+1}-z^{k+1}\right),\label{C-var-x-acc}
\end{align}
where $\lambda,\tau_k, \theta_k>0$ for all $k\geq 0$.
\end{algorithm}

\begin{remark} We would like to emphasize that when $C=0$ Algorithm \ref{alg-mon-acc} has a similar structure to Algorithm \ref{alg-mon}. Indeed, in this setting, the monotone 
inclusion problems \eqref{p-mon} and \eqref{d-mon-equiv} become 
\begin{equation}\label{p-mon-c0}
\mbox{find } x \in {\cal H} \ \mbox{such that} \ 0\in A x+ (L^*\circ B\circ L) x
\end{equation}
and, respectively,
\begin{equation}\label{d-mon-equiv-c0}
\mbox{ find } v\in{\cal G} \ \mbox{such that }   \ 0\in B^{-1} v+ \Big((-L)\circ (A^{-1})\circ (-L^*)\Big) v.
\end{equation}
The two problems \eqref{p-mon-c0} and \eqref{d-mon-equiv-c0} are dual to each other in the sense of the Attouch-Th\'{e}ra duality (see \cite{att-th}). By taking in 
\eqref{C-var-y-acc}-\eqref{C-var-x-acc} $\lambda=1$, $\theta_k=1$ (which corresponds to the limit case $\mu=0$ and $\gamma=0$ in the equation \eqref{theta} below) and 
$\tau_k=c > 0$ for all $k\geq 0$,  then the resulting iterative scheme
reads
\begin{eqnarray} y^{k+1} & = &
\left(c LL^*+M_2^k+B^{-1}\right)^{-1}\left[-c L(z^k-c^{-1}x^k)+M_2^ky^k\right] \label{C-var-y-acc-c0}\\
z^{k+1} & = &\left(\id+c^{-1}A^{-1}\right)^{-1}
\left[-L^*y^{k+1}+c^{-1}x^k\right] \label{C-var-z-acc-c0}\\
x^{k+1} & = & x^k+c\left(-L^*y^{k+1}-z^{k+1}\right).\label{C-var-x-acc-c0}
\end{eqnarray}
This is nothing else than Algorithm \ref{alg-mon} employed to the solving of the primal-dual system of monotone inclusions 
\eqref{d-mon-equiv-c0}-\eqref{p-mon-c0}, that is, by treating  \eqref{d-mon-equiv-c0} as the primal monotone inclusion  and 
\eqref{p-mon-c0} as its dual monotone inclusion
(notice that in this case we take in relation \eqref{C-var-z} of Algorithm \ref{alg-mon} $M_2^k=0$ for all $k\geq 0$).    
\end{remark}

Concerning the parameters involved in Algorithm \ref{alg-mon-acc} we assume that 
\begin{equation}\label{tau1} \mu\tau_1< 2\gamma,
\end{equation}
\begin{equation}\label{lambda} \lambda\geq \mu+1, 
\end{equation}  
there exists $\sigma_0>0$ such that 
\begin{equation}\label{sigma0-tau1} \sigma_0\tau_1\|L\|^2\leq 1,
\end{equation}
and for all $k\geq 0$:

\begin{equation}\label{theta} \theta_k =\frac{1}{\sqrt{1+\tau_{k+1}\lambda^{-1}(2\gamma-\mu\tau_{k+1})}}
\end{equation}
\begin{equation}\label{tau} \tau_{k+2}=\theta_{k}\tau_{k+1}
\end{equation}
\begin{equation}\label{sigma} \sigma_{k+1}=\theta_{k}^{-1}\sigma_{k}
\end{equation}
\begin{equation}\label{mon1} \tau_k LL^*+M_2^k\succcurlyeq \sigma_k^{-1}\id 
\end{equation}
\begin{equation}\label{mon2} \frac{\tau_k}{\tau_{k+1}} LL^*+\frac{1}{\tau_{k+1}}M_2^k\succcurlyeq 
\frac{\tau_{k+1}}{\tau_{k+2}} LL^*+\frac{1}{\tau_{k+2}}M_2^{k+1}.
\end{equation}

\begin{remark}\label{rem-equiv-acc} Fix an arbitrary $k\geq 1$. From \eqref{C-var-y-acc} we have 
\begin{equation}\label{c1} -\tau_k L(z^k-\tau_k^{-1}x^k)+M_2^ky^k\in \widetilde{M_2}^ky^{k+1}+B^{-1}y^{k+1},\end{equation}
where 
\begin{equation}\label{def-m_tild}  \widetilde{M_2}^k:= \tau_k LL^*+M_2^k.
\end{equation}
Due to \eqref{C-var-x-acc} we have $$-\tau_kz^k=\tau_k L^*y^k+\theta_{k-1}(x^k-x^{k-1}),$$
which combined with \eqref{c1} delivers 
\begin{equation}\label{C-var-y-acc-equiv} \widetilde{M_2}^k(y^k-y^{k+1})+L\left[x^k+\theta_{k-1}(x^k-x^{k-1})\right]\in B^{-1}y^{k+1}.
\end{equation}
Fix  now an arbitrary $k\geq 0$. From \eqref{j-inv-op} and \eqref{C-var-z-acc} we have 
\begin{align*}-L^*y^{k+1}+\frac{\lambda}{\theta_k}\left(z^{k+1}+L^*y^{k+1}\right)-Cx^k =&-L^*y^{k+1}+\frac{\lambda}{\tau_{k+1}}x^k-Cx^k\\
&-\frac{\lambda}{\tau_{k+1}}J_{(\tau_{k+1}/\lambda)A}\left[x^k+\frac{\tau_{k+1}}{\lambda}\left(-L^*y^{k+1}-Cx^k\right)\right].
\end{align*}
By using \eqref{C-var-x-acc} we obtain 
\begin{equation}\label{C-var-x-acc-equiv}
x^{k+1} =J_{(\tau_{k+1}/\lambda)A}\left[x^k+\frac{\tau_{k+1}}{\lambda}\left(-L^*y^{k+1}-Cx^k\right)\right].
\end{equation}
Finally, the definition of the resolvent yields the relation
\begin{equation}\label{C-var-x-acc-equiv-def-res}
\frac{\lambda}{\tau_{k+1}}\left(x^k-x^{k+1}\right)-L^*y^{k+1}+Cx^{k+1}-Cx^k\in(A+C)x^{k+1}.
\end{equation}
\end{remark}

\begin{remark}\label{param-choice-pf-acc} The choice 
\begin{equation}\label{choice-pd} \tau_k LL^*+M_2^k=\sigma_k^{-1}\id \ \forall k\geq 0
\end{equation}
leads to so-called accelerated versions of primal-dual algorithms that have been intensively studied in the literature. Indeed, under these auspices \eqref{C-var-y-acc} becomes 
(by taking into account also \eqref{C-var-x-acc})
\begin{eqnarray*} y^{k+1} &=& \left(\sigma_k^{-1}\id+B^{-1}\right)^{-1}\left[L\left(-\tau_kz^k+x^k-\tau_kL^*y^k\right)+\sigma_k^{-1}y^k\right]\\ 
&=& \left(\id+\sigma_k B^{-1}\right)^{-1}\left[y^k+\sigma_k L\left(x^k+\theta_k(x^k-x^{k-1})\right)\right].
\end{eqnarray*}
This together with \eqref{C-var-x-acc-equiv} gives rise for all $k \geq 0$ to the following numerical scheme 
\begin{eqnarray} x^{k+1} & = &
J_{(\tau_{k+1}/\lambda)A}\left[x^k+\frac{\tau_{k+1}}{\lambda}\left(-L^*y^{k+1}-Cx^k\right)\right] \label{bchh1}\\
y^{k+2} & = &  J_{\sigma_{k+1}B^{-1}}\left[y^{k+1}+\sigma_{k+1} L\left(x^{k+1}+\theta_{k+1}(x^{k+1}-x^{k})\right)\right]\label{bchh2},
\end{eqnarray}
which has been investigated by Bo\c t, Csetnek, Heinrich and Hendrich in \cite[Algorithm 5]{b-c-h2}. 
Not least, assuming that $C=0$ and $\lambda=1$, the variational case $A=\partial f$ and $B=\partial g$  leads for all $k \geq 0$ to the numerical scheme 
\begin{eqnarray} y^{k+1} & = &  \prox\nolimits_{\sigma_k g^*}\left[y^{k}+\sigma_{k} L\left(x^{k}+\theta_{k}(x^{k}-x^{k-1})\right)\right]\\ \label{ch-p-acc1}
x^{k+1} & = &\prox\nolimits_{\tau_{k+1}f}\left(x^k-\tau_{k+1}L^*y^{k+1}\right)\label{ch-p-acc2},
\end{eqnarray}
which has been considered by Chambolle and Pock in \cite[Algorithm 2]{ch-pck}.

We also notice that condition \eqref{choice-pd} guarantees the fulfillment of both  \eqref{mon1} and \eqref{mon2}, due to the fact that 
the sequence $(\tau_{k+1}\sigma_k)_{k\geq 0}$ is constant (see \eqref{tau} and \eqref{sigma}). 
\end{remark}

\begin{remark}\label{rem-admm-acc} Assume again that $C=0$ and consider the variational case as described in Problem \ref{pr-var}. From 
\eqref{c1} and \eqref{def-m_tild} we derive for all $k \geq 1$ the relation 
$$0\in\partial g^*(y^{k+1})+\tau_k L\left(L^*y^{k+1}+z^k-\tau_k^{-1}x^k\right)+M_2^k\left(y^{k+1}-y^k\right),$$
which in case $M_2^k\in {\cal S_+}({\cal G})$ is equivalent to
$$y^{k+1}=\argmin_{y\in {\cal G}}\left[g^*(y)+\frac{\tau_k}{2}\left\|L^*y+z^k-\tau_k^{-1}x^k\right\|^2+\frac{1}{2}\|y-y^k\|_{M_2^k}^2\right].$$
Algorithm \ref{alg-mon-acc} becomes in case $\lambda =1$
\begin{eqnarray*} y^{k+1} & = &
\argmin_{y\in {\cal G}}\left[g^*(y)+\frac{\tau_k}{2}\left\|L^*y+z^k-\tau_k^{-1}x^k\right\|^2+\frac{1}{2}\|y-y^k\|_{M_2^k}^2\right] \label{C-var-y-acc-var}\\
 z^{k+1} & = &\left(\theta_k-1\right)L^*y^{k+1} + \theta_k \argmin_{x\in {\cal H}}
\left[f^*(x)+\frac{\tau_{k+1}}{2}\left\|-L^*y^{k+1}-z+\tau_{k+1}^{-1}x^k\right\|^2\right] \label{C-var-z-acc-var}\\
x^{k+1} & = & x^k+\frac{\tau_{k+1}}{\theta_k}\left(-L^*y^{k+1}-z^{k+1}\right),\label{C-var-x-acc-var}
\end{eqnarray*}
which can be regarded as an accelerated version of the algorithm \eqref{h-var-x}-\eqref{h-var-y} 
in Remark \ref{bbc-sh-teb}. 
\end{remark}

We present the main theorem of this section. 

\begin{theorem}\label{th-acc} Consider the setting  of Problem \ref{pr-acc} and let $(x,v)$ be a primal-dual solution to the primal-dual system of monotone inclusions
\eqref{p-mon}-\eqref{d-mon}. Let $(x^k,z^k,y^k)_{k\geq 0}$ be the sequence generated by Algorithm \ref{alg-mon-acc} and assume that 
the relations \eqref{tau1}-\eqref{mon2} are fulfilled. 
Then we have for all $n\geq 2$
\begin{eqnarray*}
& & \frac{\lambda\|x^{n}-x\|^2}{\tau_{n+1}^2}+\frac{1-\sigma_0\tau_1\|L\|^2}{\sigma_0\tau_1}\|y^n-v\|^2 \leq\\
& & \frac{\lambda\|x^1-x\|^2}{\tau_2^2}+\frac{\|y^{1}-v\|_{\tau_1 LL^*+M_2^1}^2}{\tau_2} +\frac{\|x^1-x^0\|^2}{\tau_1^2} + 
\frac{2}{\tau_1}\langle L(x^1-x^0),y^1-v\rangle.\end{eqnarray*}
Moreover, $\lim\limits_{n\rightarrow+\infty}n\tau_n=\frac{\lambda}{\gamma}$, hence one obtains for $(x^n)_{n \geq 0}$ an 
order of convergence of ${\cal {O}}(\frac{1}{n})$. 
\end{theorem}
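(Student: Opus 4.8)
The plan is to mimic the structure of the proof of Theorem \ref{cong-it}, producing a single ``Fejér-type'' descent inequality for a suitable energy, but now exploiting the strong monotonicity of $A+C$ and the dynamic step sizes to obtain a weighted contraction rather than a mere non-increase. First I would fix the primal-dual solution $(x,v)$ and record the two subdifferential-type inclusions it satisfies, namely $-L^*v\in (A+C)x$ and $v\in B(Lx)$. Then, using \eqref{C-var-x-acc-equiv-def-res} together with the $\gamma$-strong monotonicity of $A+C$, I would test the inclusion at $x^{k+1}$ against $x^{k+1}-x$ to gain a term $\gamma\|x^{k+1}-x\|^2$ beyond what pure monotonicity gives; symmetrically, I would use \eqref{C-var-y-acc-equiv} and the monotonicity of $B^{-1}$ to test the dual inclusion at $y^{k+1}$ against $y^{k+1}-v$. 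The monotonicity/Lipschitz bound on $C$ ($\mu$-Lipschitz) is what forces the appearance of the $\mu\tau_{k+1}$ term, and this is precisely why the combination $2\gamma-\mu\tau_{k+1}$ shows up in the definition \eqref{theta} of $\theta_k$.

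Next I would add the two tested inequalities and convert the inner products into squared seminorms via the polarization identity, exactly as in the chain of displays leading to \eqref{fey-var}. The key novelty is the bookkeeping of the step sizes: the update rules \eqref{tau}, \eqref{sigma}, \eqref{theta} are engineered so that the coefficient of $\|x^{k+1}-x\|^2$ scaled by $\lambda/\tau_{k+1}^2$ telescopes against the coefficient of $\|x^k-x\|^2$ scaled by $\lambda/\tau_k^2$, with the surplus $\gamma$-term absorbing the discrepancy. The dual metric $\widetilde{M_2}^k=\tau_k LL^*+M_2^k$ from \eqref{def-m_tild} plays the role that $M_2^k+c\,\id$ played before, and condition \eqref{mon2} is exactly the monotonicity needed to make the dual seminorm terms telescope after rescaling by $1/\tau_{k+1}$. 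I would therefore aim to produce, for each $k\geq 1$, an inequality of the form
\begin{align*}
\frac{\lambda\|x^{k+1}-x\|^2}{\tau_{k+1}\tau_{k+2}}+\frac{\|y^{k+1}-v\|_{\widetilde{M_2}^{k+1}}^2}{\tau_{k+2}}+\frac{\|x^{k+1}-x^k\|^2}{\tau_{k+1}^2}+\frac{2}{\tau_{k+1}}\langle L(x^{k+1}-x^k),y^{k+1}-v\rangle &\leq \\
\frac{\lambda\|x^{k}-x\|^2}{\tau_{k}\tau_{k+1}}+\frac{\|y^{k}-v\|_{\widetilde{M_2}^{k}}^2}{\tau_{k+1}}+\frac{\|x^{k}-x^{k-1}\|^2}{\tau_{k}^2}+\frac{2}{\tau_{k}}\langle L(x^{k}-x^{k-1}),y^{k}-v\rangle&,
\end{align*}
so that the left-hand energy at step $k$ equals (up to nonnegative discarded terms) the right-hand energy, giving a telescoping chain down to the $k=1$ data on the right of the claimed bound.

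The main obstacle I anticipate is twofold. First, correctly grouping the cross terms so that the ``anti-symmetric'' quantity $\tfrac{2}{\tau_{k+1}}\langle L(x^{k+1}-x^k),y^{k+1}-v\rangle$ reappears consistently on both sides; this requires substituting the step relation \eqref{C-var-x-acc} (equivalently the displayed $-\tau_k z^k=\tau_k L^*y^k+\theta_{k-1}(x^k-x^{k-1})$ used in Remark \ref{rem-equiv-acc}) to re-express the $z$-coupling as an $x$-difference coupled to $L^*$. Second, and more delicately, after summing I must pass from the telescoped bound to the clean conclusion: discarding the $\widetilde{M_2}^{n+1}$ term and the cross term on the left is safe only after completing a square, and here the hypothesis \eqref{sigma0-tau1}, i.e. $\sigma_0\tau_1\|L\|^2\leq 1$, is what guarantees that $\tfrac{\|x^{n}-x^{n-1}\|^2}{\tau_{n}^2}+\tfrac{2}{\tau_n}\langle L(x^n-x^{n-1}),y^n-v\rangle$ together with the controllable part of the dual seminorm stays bounded below by a positive multiple of $\|y^n-v\|^2$; using $\widetilde{M_2}^n\succcurlyeq\sigma_n^{-1}\id$ from \eqref{mon1} and $\|L(x^n-x^{n-1})\|^2\leq\|L\|^2\|x^n-x^{n-1}\|^2$, one recovers the coefficient $\tfrac{1-\sigma_0\tau_1\|L\|^2}{\sigma_0\tau_1}$ on $\|y^n-v\|^2$. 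Finally, for the rate I would solve the recursion \eqref{tau}–\eqref{theta} asymptotically: writing $\theta_k=\tau_{k+2}/\tau_{k+1}$ and linearizing \eqref{theta} for small $\tau$ gives $\tau_{k+1}^{-1}-\tau_{k+2}^{-1}\to\gamma/\lambda$, whence $\tau_n^{-1}\sim\gamma n/\lambda$ and $n\tau_n\to\lambda/\gamma$; combined with the bounded left-hand energy this yields $\|x^n-x\|^2=\mathcal{O}(\tau_{n+1}^2)=\mathcal{O}(1/n^2)$, i.e. the stated $\mathcal{O}(1/n)$ order for $\|x^n-x\|$.
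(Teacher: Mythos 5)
Your plan follows the paper's proof essentially step for step: test \eqref{C-var-x-acc-equiv-def-res} against $x^{k+1}-x$ using the $\gamma$-strong monotonicity of $A+C$ together with the $\mu$-Lipschitz bound on $C$, test \eqref{C-var-y-acc-equiv} against $y^{k+1}-v$ using the monotonicity of $B^{-1}$, add, divide by $\tau_{k+1}$, telescope via the identity $\frac{\lambda}{2\tau_{k+1}^2}+\frac{\gamma}{\tau_{k+1}}-\frac{\mu}{2}=\frac{\lambda}{2\tau_{k+2}^2}$ combined with \eqref{mon1}--\eqref{mon2}, and finish exactly as you describe by using \eqref{sigma0-tau1} and \eqref{mon1} to extract the coefficient $\frac{1-\sigma_0\tau_1\|L\|^2}{\sigma_0\tau_1}$ of $\|y^n-v\|^2$, with the asymptotics $n\tau_n\to\lambda/\gamma$ obtained from the recursion as you indicate. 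The one imprecision is the normalization in your displayed one-step energy: the term that actually telescopes (and matches the theorem's $\lambda\|x^n-x\|^2/\tau_{n+1}^2$ and $\lambda\|x^1-x\|^2/\tau_2^2$) is $\lambda\|x^k-x\|^2/\tau_{k+1}^2$, not $\lambda\|x^k-x\|^2/(\tau_k\tau_{k+1})$ --- since $\tau_{k+1}<\tau_k$, replacing the former by the latter weakens both sides of the one-step inequality simultaneously and would break the telescoping chain.
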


\begin{proof} Let $k\geq 1$ be fixed. From \eqref{C-var-y-acc-equiv}, the relation $Lx\in B^{-1}v$ (see \eqref{prim-dual}) 
and the monotonicity of $B^{-1}$ we obtain
$$\left\langle y^{k+1}-v, \widetilde{M_2}^k(y^k-y^{k+1})+L\left[x^k+\theta_{k-1}(x^k-x^{k-1})\right]-Lx\right\rangle\geq 0$$
or, equivalently,
\begin{equation}\label{y1} \frac{1}{2}\|y^k-v\|_{\widetilde{M_2}^k}^2-\frac{1}{2}\|y^{k+1}-v\|_{\widetilde{M_2}^k}^2
-\frac{1}{2}\|y^k-y^{k+1}\|_{\widetilde{M_2}^k}^2\geq 
\left\langle y^{k+1}-v,Lx-L\left[x^k+\theta_{k-1}(x^k-x^{k-1})\right]\right\rangle.
\end{equation}
Further, from \eqref{C-var-x-acc-equiv-def-res}, the relation $-L^* v\in (A+C) x$ (see \eqref{prim-dual}) and the 
$\gamma$-strong monotonicity of $A+C$ we obtain 
$$\left\langle x^{k+1}-x, \frac{\lambda}{\tau_{k+1}}\left(x^k-x^{k+1}\right)-L^*y^{k+1}+Cx^{k+1}-Cx^k+L^*v\right\rangle
\geq \gamma\|x^{k+1}-x\|^2$$
or, equivalently,
\begin{eqnarray}\label{ineqx} &\frac{\lambda}{2\tau_{k+1}}\|x^k-x\|^2-\frac{\lambda}{2\tau_{k+1}}\|x^{k+1}-x\|^2
-\frac{\lambda}{2\tau_{k+1}}\|x^k- x^{k+1}\|^2\geq& \gamma\|x^{k+1}-x\|^2\nonumber\\&&+\langle x^ {k+1}-x,Cx^k-Cx^{k+1}\rangle\nonumber\\
&&+\langle y^{k+1}-v,Lx^{k+1}- x\rangle.
\end{eqnarray}
Since $C$ is $\mu$-Lipschitz continuous, we have that 
$$\langle x^ {k+1}-x,Cx^k-Cx^{k+1}\rangle\geq -\frac{\mu\tau_{k+1}}{2}\|x^{k+1}-x\|^2-\frac{\mu}{2\tau_{k+1}}\|x^{k+1}-x^k\|^2,$$
which combined with \eqref{ineqx} implies
\begin{eqnarray}\label{ineqx-2} \frac{\lambda}{2\tau_{k+1}}\|x^k-x\|^2
&\geq& \left(\frac{\lambda}{2\tau_{k+1}}+\gamma-\frac{\mu\tau_{k+1}}{2}\right)\|x^{k+1}-x\|^2
+\frac{\lambda-\mu}{2\tau_{k+1}}\|x^{k+1}- x^k\|^2\nonumber\\
&&+\langle y^{k+1}-v,Lx^{k+1}-Lx\rangle.
\end{eqnarray}
By adding the inequalities \eqref{y1} and \eqref{ineqx-2}, we obtain 
\begin{eqnarray}\label{xy-comb}\frac{1}{2}\|y^k-v\|_{\widetilde{M_2}^k}^2+ \frac{\lambda}{2\tau_{k+1}}\|x^k-x\|^2&\geq&
\frac{1}{2}\|y^{k+1}-v\|_{\widetilde{M_2}^k}^2+\left(\frac{\lambda}{2\tau_{k+1}}+\gamma-\frac{\mu\tau_{k+1}}{2}\right)\|x^{k+1}-x\|^2\nonumber\\
&&+\frac{1}{2}\|y^k-y^{k+1}\|_{\widetilde{M_2}^k}^2+\frac{\lambda-\mu}{2\tau_{k+1}}\|x^{k+1}- x^k\|^2\nonumber\\
&&+\left\langle y^{k+1}-\ol v,L \left[x^{k+1}-x^k-\theta_{k-1}(x^k-x^{k-1})\right]\right\rangle.
\end{eqnarray}
Further, we have 
\begin{align*}
\left\langle L\left[x^{k+1}-x^k-\theta_{k-1}(x^k-x^{k-1})\right],y^{k+1}-v\right\rangle =  \ &
\langle L(x^{k+1}-x^{k}),y^{k+1}-v\rangle\\
&-\theta_{k-1}\langle L(x^{k}-x^{k-1}),y^{k}-v\rangle \\
& + \theta_{k-1}\langle L(x^{k}-x^{k-1}),y^{k}-y^{k+1}\rangle \\
 \geq \ & \langle L(x^{k+1}-x^{k}),y^{k+1}-v\rangle\\
&-\theta_{k-1}\langle L(x^{k}-x^{k-1}),y^{k}- v\rangle\\
&  -\frac{\theta_{k-1}^2\|L\|^2\sigma_{k}}{2}\|x^{k-1}-x^k\|^2-\frac{\|y^k-y^{k+1}\|^2}{2\sigma_{k}}.
\end{align*}

By combining this inequality with \eqref{xy-comb}  we obtain (after dividing by $\tau_{k+1}$)
\begin{eqnarray}\frac{\|y^k-v\|_{\widetilde{M_2}^k}^2}{2\tau_{k+1}}+ \frac{\lambda}{2\tau^2_{k+1}}\|x^k-x\|^2&\geq&
\frac{\|y^{k+1}-v\|_{\widetilde{M_2}^k}^2}{2\tau_{k+1}}+\left(\frac{\lambda}{2\tau^2_{k+1}}+\frac{\gamma}{\tau_{k+1}}-\frac{\mu}{2}\right)\|x^{k+1}-x\|^2\nonumber\\
&&+\frac{\|y^k-y^{k+1}\|_{\widetilde{M_2}^k}^2}{2\tau_{k+1}}-\frac{\|y^k-y^{k+1}\|^2}{2\tau_{k+1}\sigma_{k}}\label{xy-comb2}\\
&&+\frac{\lambda-\mu}{2\tau^2_{k+1}}\|x^{k+1}- x^k\|^2-\frac{\theta_{k-1}^2\|L\|^2\sigma_{k}}{2\tau_{k+1}}\|x^k-x^{k-1}\|^2\nonumber\\
&&+\frac{1}{\tau_{k+1}}\langle L(x^{k+1}-x^{k}),y^{k+1}-v\rangle\nonumber\\
&&-\frac{\theta_{k-1}}{\tau_{k+1}}\langle L(x^{k}-x^{k-1}),y^{k}-v\rangle\nonumber.
\end{eqnarray}
From \eqref{mon1} and \eqref{def-m_tild} we have that the term in \eqref{xy-comb2} is nonnegative. Further, 
noticing that (see \eqref{theta}, \eqref{tau}, \eqref{sigma} and \eqref{sigma0-tau1})
$$\frac{\theta_{k-1}}{\tau_{k+1}}=\frac{1}{\tau_{k}}$$
$$\frac{\lambda}{2\tau_{k+1}^2}+\frac{\gamma}{\tau_{k+1}}-\frac{\mu}{2}=\frac{\lambda}{2\tau_{k+2}^2},$$
$$\tau_{k+1}\sigma_{k}=\tau_k\sigma_{k-1}=...=\tau_1\sigma_{0}$$ and
$$\frac{\|L\|^2\sigma_{k}\theta_{k-1}^2}{\tau_{k+1}}=
\frac{\tau_{k+1}\|L\|^2\sigma_{k}}{\tau_{k}^2}=
\frac{\tau_1\|L\|^2\sigma_{0}}{\tau_k^2}\leq \frac{1}{\tau_k^2},$$
we obtain (see also \eqref{lambda} and \eqref{mon2})
\begin{eqnarray*}\frac{\|y^k-v\|_{\widetilde{M_2}^k}^2}{2\tau_{k+1}}+ \frac{\lambda}{2\tau^2_{k+1}}\|x^k-x\|^2 \!\! &\geq&
\frac{\|y^{k+1}-v\|_{\widetilde{M_2}^{k+1}}^2}{2\tau_{k+2}}+\frac{\lambda}{2\tau_{k+2}^2}\|x^{k+1}-x\|^2\nonumber\\
&&+\frac{1}{2\tau^2_{k+1}}\|x^{k+1}- x^k\|^2-\frac{1}{2\tau^2_{k}}\|x^k-x^{k-1}\|^2\nonumber\\
&&+\frac{1}{\tau_{k+1}}\langle L(x^{k+1}-x^{k}),y^{k+1}-v\rangle \!-\!\frac{1}{\tau_{k}}\langle L(x^{k}-x^{k-1}),y^{k}-v\rangle\nonumber.
\end{eqnarray*}
Let $n$ be a natural number such that $n\geq 2$. Summing up the above inequality from $k=1$ to $n-1$, it follows
\begin{eqnarray*}\frac{\|y^1- v\|_{\widetilde{M_2}^1}^2}{2\tau_{2}}+ \frac{\lambda}{2\tau^2_{2}}\|x^1- x\|^2&\geq&
\frac{\|y^{n}- v\|_{\widetilde{M_2}^{n}}^2}{2\tau_{n+1}}+\frac{\lambda}{2\tau_{n+1}^2}\|x^{n}- x\|^2\nonumber\\
&&+\frac{1}{2\tau^2_{n}}\|x^{n}- x^{n-1}\|^2-\frac{1}{2\tau^2_{1}}\|x^1-x^{0}\|^2\nonumber\\
&&+\frac{1}{\tau_{n}}\langle L(x^{n}-x^{n-1}),y^{n}- v\rangle-\frac{1}{\tau_{1}}\langle L(x^{1}-x^{0}),y^{1}- v\rangle\nonumber.
\end{eqnarray*}
The inequality in the statement of the theorem follows by combining this relation with (see \eqref{mon1})
$$\frac{\|y^{n}- v\|_{\widetilde{M_2}^{n}}^2}{2\tau_{n+1}}\geq \frac{\|y^{n}- v\|^2}{2\sigma_n\tau_{n+1}},$$
$$\frac{1}{2\tau^2_{n}}\|x^{n}- x^{n-1}\|^2+\frac{1}{\tau_{n}}\langle L(x^{n}-x^{n-1}),y^{n}- v\rangle\geq 
-\frac{\|L\|^2}{2}\|y^n- v\|^2 \ \mbox{and} \ \sigma_n\tau_{n+1}=\sigma_0\tau_1.$$ 

Finally, we notice that for any $n\geq 0$ (see \eqref{theta} and \eqref{tau})
\begin{equation}\label{tau-n}\tau_{n+2}=\frac{\tau_{n+1}}{\sqrt{1+\frac{\tau_{n+1}}{\lambda}(2\gamma-\mu\tau_{n+1})}}.\end{equation}
From here it follows that $\tau_{n+1}<\tau_n$ for all $n\geq 1$ and 
$\lim\limits_{n\rightarrow+\infty}n\tau_n=\lambda/\gamma$ (see \cite[page 261]{b-c-h2}). The proof is complete. 
\end{proof}

\begin{remark}\label{param-choice} In Remark \ref{param-choice-pf-acc} we provided an example of a 
family of linear, continuous and self-adjoint operators  $(M_2^k)_{k \geq 0}$ for which the relations \eqref{mon1} and \eqref{mon2} are fulfilled. In the following we will furnish more examples
in this sense. 

To begin we notice that simple algebraic manipulations easily lead to the conclusion that if 
\begin{equation}\label{tau1-stronger} \mu\tau_1< \gamma,
\end{equation}
then $(\theta_k)_{k\geq 0}$ is monotonically increasing. In the examples below we replace \eqref{tau1} with the stronger assumption \eqref{tau1-stronger}.

(i) For all $k\geq 0$, take $$M_2^k:=\sigma_k^{-1}\id.$$
Then \eqref{mon1} trivially holds, while \eqref{mon2}, which can be equivalently written as 
$$\frac{1}{\theta_{k-1}}LL^*+\frac{1}{\tau_{k+1}}M_2^k\succcurlyeq 
\frac{1}{\theta_{k}} LL^*+\frac{1}{\tau_{k+2}}M_2^{k+1},$$
follows from the fact that $(\tau_{k+1}\sigma_k)_{k\geq 0}$ is constant (see \eqref{tau} and\eqref{sigma}) 
and $(\theta_k)_{k\geq 0}$ is monotonically increasing.

(ii) For all $k\geq 0$, take $$M_2^k:=0.$$
Relation \eqref{mon2} holds since $(\theta_k)_{k\geq 0}$ is monotonically increasing. 
Condition \eqref{mon1} becomes in this setting 
\begin{equation}\label{mon1-m0} \sigma_k\tau_k LL^*\succcurlyeq  \id \ \forall k \geq 0.
\end{equation}
Since $\tau_k>\tau_{k+1}$ for all $k\geq 1$ and $(\tau_{k+1}\sigma_k)_{k\geq 0}$ is constant, \eqref{mon1-m0} holds, if 
\begin{equation}\label{cond} LL^*\in{\cal P}_{\frac{1}{\sigma_0\tau_1}}({\cal G}). 
\end{equation}
In this case one has to take in \eqref{sigma0-tau1} 
$$\sigma_0\tau_1\|L\|^2=1.$$
In view of the above theorem, the iterative scheme obtained in this particular instance (see Remark \ref{rem-admm-acc}) 
can be regarded as an accelerated version of the classical ADMM algorithm (see Remark \ref{bbc-sh-teb} and Remark \ref{rem-th1}(ii)). 

(iii) For all $k\geq 0$, take $$M_2^k:=\tau_{k}\id.$$
Relation \eqref{mon2} holds since $(\theta_k)_{k\geq 0}$ is monotonically increasing. On the other hand, condition \eqref{mon1} is equivalent to 
\begin{equation}\label{cond2} \sigma_k\tau_k(LL^*+\id)\succcurlyeq  \id.
\end{equation}
Since $\tau_k>\tau_{k+1}$ for all $k\geq 1$ and $(\tau_{k+1}\sigma_k)_{k\geq 0}$ is constant, \eqref{cond2} holds, if 
\begin{equation}\label{cond3} \sigma_0\tau_1 LL^*\succcurlyeq  (1-\sigma_0\tau_1)\id.\end{equation}
In case $\sigma_0\tau_1\geq 1$ (which is allowed according to \eqref{sigma0-tau1} if $\|L\|^2\leq 1$) this is obviously fulfilled. 
Otherwise, in order to guarantee \eqref{cond3}, we have to impose that 
\begin{equation}\label{cond4}  LL^*\in{\cal P}_{\frac{1-\sigma_0\tau_1}{\sigma_0\tau_1}}({\cal G}).\end{equation}

\end{remark}

\end{document}